\documentclass[10pt]{amsart}
\usepackage[T1]{fontenc}
\usepackage[utf8]{inputenc}
\usepackage[english]{babel}
\usepackage{lmodern}
\usepackage{amsmath,amssymb,mathtools}
\usepackage{microtype}
\usepackage{needspace}
\usepackage[unicode,hidelinks,bookmarksnumbered]{hyperref}
\usepackage{bookmark}
\allowdisplaybreaks[1]
\theoremstyle{plain}
\newtheorem{theorem}{Theorem}[section]
\newtheorem*{introtheorem}{Theorem A}
\newtheorem{proposition}[theorem]{Proposition}
\newtheorem{corollary}[theorem]{Corollary}
\newtheorem{lemma}[theorem]{Lemma}
\theoremstyle{definition}
\newtheorem{definition}[theorem]{Definition}
\newtheorem{example}[theorem]{Example}
\numberwithin{equation}{section}

\title[Invariant measures over higher local fields]{Invariant Measures on Smooth Algebraic Groups over Higher Local Fields}
\author{Xuecai Ma}
\address{Megawave Fusion, Hefei, Anhui 230071, P.R. China}
\email{xuecai.u.ma@foxmail.com}
\author{Jingwen Zhu}
\address{Westlake University, Hangzhou, Zhejiang 310030, P.R. China}
\email{zhujingwen@westlake.edu.cn}
\date{}
\keywords{Higher local field, smooth algebraic group, invariant measure, cylinder ring}

\begin{document}
\begin{abstract}
We construct a positive, finitely additive, left-invariant measure on the congruence cylinder ring of a smooth $d$-dimensional algebraic group over an $n$-dimensional local field. The measure takes values in $\mathbb{R}((X_2))\cdots((X_n))$. A normalized cylinder of depth $(i_1,\ldots,i_n)$ has volume $q^{-di_1}\prod_{\ell=2}^nX_\ell^{di_\ell}$, where $q$ is the final residue-field cardinality. We also obtain coordinate-change and quotient integration formulas for this measure.
\end{abstract}

\maketitle
\tableofcontents

\section{Introduction}\label{sec:intro}

Higher local fields arise from flags on arithmetic schemes and have successive residue fields ending in a finite field \cite{zhukov,morrow-higher}. Local parameters combine the successive valuations into a valuation with several components, recording the arithmetic data at each residue level. Over a one-dimensional local field, normalized volumes of compact open additive subgroups are determined by finite coset indices \cite[Chapter~I]{weil}. In higher dimension, the outer valuation topology is not locally compact, and one valuation ideal may have infinite index in another. Measures must therefore record higher valuation levels while respecting finite disjoint decompositions.

Measures with values in iterated Laurent series fields record finite indices in real coefficients and higher valuation levels in formal variables. Fesenko~\cite{fesenko,fesenko-higher} constructed such measures on additive groups of higher local fields. Morrow developed integration on valuation fields, product spaces, and general linear groups \cite{morrow-product,morrow-valuations}, and studied nonlinear changes of variables \cite{morrow-fubini}. Waller~\cite{waller-gl2} treated congruence measures on $\operatorname{GL}_2$ over two-dimensional local fields; Mori~\cite{mori} developed measure theory and integration on $\operatorname{GL}_m$ and its parabolic subgroups over higher local fields. Waller's level structures and groups of ordered type provide a general framework, with applications to reductive algebraic groups \cite{waller-levels,waller}.

Related approaches to harmonic analysis and representations use Hecke algebras to encode convolution of functions on suitable double cosets \cite{kapranov,kim-lee,lee}. Pro-vector spaces provide a categorical framework for representations of algebraic groups over two-dimensional local fields \cite{gaitsgory-kazhdan}, while filtered adelic spaces support Fourier transforms and Poisson formulas for algebraic surfaces over finite fields \cite{osipov-parshin}.

In the present paper, we construct a measure on congruence cylinders for every smooth algebraic group. In analytic coordinates at the identity, the nonlinear terms in the group law have higher valuation, so the preimages of sufficiently deep scalar balls are subgroups. For groups of positive dimension, increasing the innermost valuation component gives a subgroup of finite index, whereas increasing a higher component gives infinite index. These two cases are recorded by the real and formal factors in the volume formula. A finite partition argument extends the cylinder volumes to a measure that is positive on every nonempty set in the cylinder ring.

Jacobian determinants give the change-of-coordinates formula on small neighborhoods. Weighting the coordinate measure by a left-invariant top-degree differential form makes these measures agree on the corresponding local rings. We then compute the effect of right translation and prove a quotient integration formula. The final section gives examples, including matrix groups, tori, and abelian varieties over fields of equal and mixed characteristic. For smooth closed matrix subgroups, Proposition~\ref{prop:matrix} identifies sufficiently deep matrix congruence subgroups with scalar balls in suitable identity coordinates.

Fix $n\ge2$. Write $F=F_n$ for an $n$-dimensional local field, with successive residue fields $F_{n-1},\ldots,F_0=k$ and $|k|=q$. Local parameters give a valuation $v:F^\times\to\mathbb{Z}^n$; set $v(0)=+\infty$. Compare tuples from the right, so the outer component is compared first. For integers $i_1,\ldots,i_n$, put
\[
\mathfrak{p}_{i_1,\ldots,i_n}=\{a\in F:v(a)\ge(i_1,\ldots,i_n)\}.
\]
For formal variables $X_2,\ldots,X_n$, let $\mathbb R_n=\mathbb{R}((X_2))\cdots((X_n))$, ordered recursively by the sign of the lowest nonzero coefficient, starting with $X_n$. Normalized depths range over
\[
\mathcal I=\{(i_1,\ldots,i_n)\in\mathbb{Z}^n:(i_1,\ldots,i_n)\ge(0,\ldots,0)\},
\]
with the same right lexicographic order. For a subset $S\subseteq F$, write $S^d$ for its $d$-fold Cartesian product. For a family of sets $\mathcal C$, $\operatorname{ring}\mathcal C$ denotes its closure under finite unions and differences.

\begin{introtheorem}[Theorem~\ref{thm:main}]
Let $G/F$ be a smooth algebraic group of dimension $d$. If $d>0$, choose an analytic chart $z:U\xrightarrow{\sim}V\subset F^d$ on an open neighborhood of the identity $e$, with $z(e)=0$. There are integers $\kappa_1,\ldots,\kappa_{n-1}\ge0$ and $\kappa_n\ge1$ such that all balls $\mathfrak{p}_{i_1+\kappa_1,\ldots,i_n+\kappa_n}^{\,d}$ with $(i_1,\ldots,i_n)\in\mathcal I$ lie in $V$, and
\[
H^z_{i_1,\ldots,i_n}:=z^{-1}(\mathfrak{p}_{i_1+\kappa_1,\ldots,i_n+\kappa_n}^{\,d}),
\qquad (i_1,\ldots,i_n)\in\mathcal I,
\]
are subgroups. Their left cosets are called \emph{congruence cylinders}. The ring
\[
\mathcal{R}_z(G(F))=\operatorname{ring}\{gH^z_{i_1,\ldots,i_n}:g\in G(F),\ (i_1,\ldots,i_n)\in\mathcal I\}
\]
has a unique finitely additive left-invariant measure $\mu^0_{G,z}:\mathcal{R}_z(G(F))\to\mathbb R_n$ with
\[
\boxed{\ \mu^0_{G,z}(gH^z_{i_1,\ldots,i_n})=q^{-di_1}\prod_{\ell=2}^nX_\ell^{di_\ell}.\ }
\]
Every nonempty measurable set has strictly positive measure. For $d=0$, counting measure on finite subsets has these properties.
\end{introtheorem}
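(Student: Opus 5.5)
The argument has two parts. First we show that deep coordinate balls are subgroups with computable indices. Then we run a standard partition argument for a ring generated by a totally ordered, nested family of subgroups.

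\emph{Step 1: subgroups.} In the chart $z$, write the group law near the identity as $m(x,y)=x+y+B(x,y)$ and the inverse as $\iota(x)=-x+C(x)$. Here $B$ and $C$ are convergent power series with coefficients in $F$; $B$ has only mixed terms of total degree $\ge2$, and $C$ has order $\ge2$. Since only finitely many coefficients matter at the level of the outer valuation, we can choose $\kappa=(\kappa_1,\ldots,\kappa_n)$ with $\kappa_n\ge1$ large enough for the following to hold. Every monomial of degree $\ge2$ evaluated on $\mathfrak p_{\kappa+i}^{d}$ has valuation at least $(i_1+\kappa_1,\ldots,i_n+\kappa_n)$. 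This uses that a product of two elements of valuation $\ge(\ast,\kappa_n+i_n)$ has outer component $\ge 2\kappa_n+2i_n>\kappa_n+i_n$ whenever $\kappa_n\ge1$. The borderline case where the outer components are equal is handled by enlarging $\kappa_1,\ldots,\kappa_{n-1}$ to absorb the valuations of the finitely many relevant coefficients. With this choice, $m$ and $\iota$ preserve each $\mathfrak p_{\kappa+i}^{d}$, so every $H^z_i$ is a subgroup. The same estimate shows that $z$ is additive modulo the deeper ball: $z(xy)\equiv z(x)+z(y)\bmod \mathfrak p_{\kappa+j}^d$ for $x,y\in H^z_i$ and suitable $j\ge i$. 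Consequently $z$ induces a bijection $H^z_i/H^z_j\cong \mathfrak p^d_{\kappa+i}/\mathfrak p^d_{\kappa+j}$, and the coset counts are the additive ones. In particular, for $j=i+(1,0,\ldots,0)$, $[H^z_i:H^z_j]=q^d$. We also need that $H^z_i\setminus H^z_j$ is infinite when $j$ increases a higher component; this is the fact that $\mathfrak p_{i'}/\mathfrak p_{i''}$ is infinite in that case.

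\emph{Step 2: structure of the ring.} The family $\{H^z_i\}_{i\in\mathcal I}$ is totally ordered by inclusion, decreasing in the right lexicographic order. Given finitely many cylinders $g_kH_{i^{(k)}}$, two such cosets are either nested or disjoint. We next refine to the finest depths occurring. Only the innermost components are compatible with finite refinement, so the refinement is more delicate: a difference $gH_i\setminus g'H_j$ with $j$ deeper in a higher component cannot be written as a finite disjoint union of cylinders. Following Fesenko, we therefore take as building blocks the \emph{standard sets} $gH_i\setminus(g_1H_j\cup\cdots\cup g_rH_j)$ with $g_kH_j\subset gH_i$ disjoint. We show that every element of $\mathcal R_z$ is a finite disjoint union of such sets; this is proved by induction on the number of generators, using the nested-or-disjoint dichotomy. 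We then define
\[
\mu(gH_i\setminus\textstyle\bigsqcup_{k=1}^r g_kH_j)=\mathrm{vol}(i)-r\,\mathrm{vol}(j),\qquad \mathrm{vol}(i)=q^{-di_1}\textstyle\prod_{\ell\ge2}X_\ell^{di_\ell}.
\]

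\emph{Step 3: well-definedness, additivity, positivity.} The main obstacle is showing that $\mu$ is independent of the decomposition. We plan to reduce to a common refinement. Any two decompositions can be refined using finite-index splittings $gH_i=\bigsqcup_{q^{d}\text{ cosets}}g'H_{i+e_1}$, together with removal of finitely many deeper cosets. Well-definedness then reduces to the identity $\mathrm{vol}(i)=q^d\,\mathrm{vol}(i+e_1)$, and to the compatibility of removing $r$ cosets of depth $j$ versus $rN$ cosets of a finite-index deeper subgroup. Both follow from the index computation in Step 1; we check the bookkeeping in the Grothendieck-group style, as in Fesenko's additive case. Finite additivity is immediate from the construction, and left invariance holds because left translation permutes cylinders of each depth.

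For positivity, take a nonempty standard set $gH_i\setminus\bigsqcup_{k=1}^r g_kH_j$. If $j$ differs from $i$ in some higher component, then $\mathrm{vol}(j)/\mathrm{vol}(i)$ is a positive power of some $X_\ell$, hence infinitesimal, so $\mu>0$. If $j$ differs only in the first component, nonemptiness forces $r<q^{d(j_1-i_1)}$, and again $\mu>0$. Positivity of a general nonempty set follows from its disjoint decomposition into standard sets. Uniqueness follows because every set in $\mathcal R_z$ is such a disjoint union and each term is forced by additivity. Finally, for $d=0$ the group $G(F)$ is finite and counting measure has the required properties.
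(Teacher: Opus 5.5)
Your proposal is correct and follows the paper's own route. You use the power-series estimate to show that deep coordinate balls are subgroups, with index $q^{d}$ per inner step and infinite index across levels (Lemma~\ref{lem:estimate} and Proposition~\ref{prop:deep}), and then a Fesenko-style shell extension whose positivity is read off from the leading real coefficient (Proposition~\ref{prop:extension}); your single-depth standard sets and purely outer-valuation estimate are only cosmetic variants. When you write out the well-definedness bookkeeping, state and prove explicitly that a finite partition of a cylinder into cylinders can only use pieces at the parent's level, since your listed same-level identities are not enough on their own; this is where the infinite-index fact actually enters, and the paper proves it by induction on the number of pieces.
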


Lemma~\ref{lem:image} shows that an analytic local diffeomorphism $f$ maps sufficiently small scalar balls according to
\[
f(a+\mathfrak{p}_{i_1,\ldots,i_n}^{\,d})
=f(a)+f'(a)\mathfrak{p}_{i_1,\ldots,i_n}^{\,d},
\]
where $f'(a)$ is the Jacobian matrix at $a$. This supplies the local change-of-variables rule for volumes.

For an analytic homomorphism $\pi:A\to Q$ of finite-dimensional $F$-analytic groups with surjective differential at the identity, put $N=\ker\pi$. Proposition~\ref{prop:quotient} constructs compatible deep congruence subgroups forming exact sequences, with multiplicative finite inner indices. With compatibly normalized cylinder measures $\mu_N,\mu_A,\mu_Q$, Proposition~\ref{prop:disintegration} gives
\[
\int_A\varphi\,d\mu_A
=\int_Q\left(\int_N\varphi(a_yh)\,d\mu_N(h)\right)d\mu_Q(y).
\]
Here $\varphi$ is a finite linear combination of indicators of the compatible congruence cylinders. For $y\in\pi(A)$, the inner integral is independent of the lift $a_y$ satisfying $\pi(a_y)=y$; it is taken to be zero outside $\pi(A)$.

We give an outline of this article. In Section~\ref{sec:construction}, we prove the main theorem, from the construction of deep congruence subgroups to the extension of their volumes.  In Section~\ref{sec:properties}  we study normalization, coordinate and parameter changes, and the behavior of the measure under translations, products, and quotients. Section~\ref{sec:examples} contains the examples, the results on matrix congruences, and explicit volume calculations.

\section{Construction of the invariant measure}\label{sec:construction}

For $1\le\ell\le n$, the field $F_\ell$ is complete for a discrete valuation $\operatorname{ord}_\ell$ with residue field $F_{\ell-1}$ \cite{zhukov}. We let $\mathcal{O}_{F_\ell}$ denote its discrete valuation ring and choose a uniformizer $\pi_\ell\in F_\ell$. We can lift these parameters successively to elements $t_1,\ldots,t_n\in F$, with $t_n=\pi_n$. Put $\mathcal{O}_F=\mathcal{O}_{F_n}$ and $\mathfrak m_F=t_n\mathcal{O}_F$. A bar at stage $\ell$ denotes reduction $\mathcal{O}_{F_\ell}\to F_{\ell-1}$.

For $a\ne0$, set $a_n=a$ and recursively define
\begin{equation}\label{eq:valuation}
i_\ell=\operatorname{ord}_\ell(a_\ell),\qquad
a_{\ell-1}=\overline{a_\ell\pi_\ell^{-i_\ell}}
\quad(\ell=n,\ldots,1),\qquad
v(a)=(i_1,\ldots,i_n).
\end{equation}
Write $v_\ell(a)$ for the $\ell$th component of $v(a)$, with $v_\ell(0)=+\infty$. The $\ell$th component of $v(t_\ell)$ is $1$, and all its other components are $0$. The rank-$n$ valuation ring and its principal fractional ideals are
\[
\mathcal{O}=\mathcal{O}_F^{(n)}=\{a:v(a)\ge(0,\ldots,0)\},\qquad
\mathfrak{p}_{i_1,\ldots,i_n}=t_1^{i_1}\cdots t_n^{i_n}\mathcal{O}.
\]
The same recursion defines $v:F_\ell^\times\to\mathbb{Z}^\ell$ on each intermediate residue field. When the field is indicated explicitly, we write
\[
\mathfrak{p}_{i_1,\ldots,i_\ell}(F_\ell)
=\{a\in F_\ell:v(a)\ge(i_1,\ldots,i_\ell)\},
\qquad v(0)=+\infty.
\]
Successive reduction gives a surjection $\rho_1:\mathcal{O}\to\mathcal{O}_{F_1}$ with kernel
\[
\mathfrak q=\{0\}\cup\{a\in\mathcal{O}\setminus\{0\}:(v_2(a),\ldots,v_n(a))>(0,\ldots,0)\}.
\]

\begin{definition}\label{def:vector-valuation}
For $d\ge1$ and $x=(x_1,\ldots,x_d)\in F^d$, define
\[
v(x)=\min_{1\le s\le d}v(x_s)\in\mathbb{Z}^n\cup\{+\infty\},
\qquad v(0)=+\infty.
\]
The minimum is taken in the right lexicographic order.
\end{definition}

\begin{proposition}\label{prop:ideal}
The valuation ideals satisfy $\mathfrak{p}_{i_1,\ldots,i_n}\mathfrak{p}_{j_1,\ldots,j_n}\subseteq\mathfrak{p}_{i_1+j_1,\ldots,i_n+j_n}$. For $(j_1,\ldots,j_n)\ge(i_1,\ldots,i_n)$,
\begin{equation}\label{eq:idealindex}
[\mathfrak{p}_{i_1,\ldots,i_n}:\mathfrak{p}_{j_1,\ldots,j_n}]=
\begin{cases}
q^{j_1-i_1},&(j_2,\ldots,j_n)=(i_2,\ldots,i_n),\\
\infty,&(j_2,\ldots,j_n)>(i_2,\ldots,i_n).
\end{cases}
\end{equation}
In particular, $\mathfrak{p}_{i_1,\ldots,i_n}/\mathfrak{p}_{i_1+1,i_2,\ldots,i_n}\simeq(k,+)$ and $[\mathfrak{p}_{i_1,\ldots,i_n}:\mathfrak{p}_{i_1+m,i_2,\ldots,i_n}]=q^m$ for $m\ge0$.
\end{proposition}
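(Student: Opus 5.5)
The product inclusion comes from additivity of $v$ on $F^\times$. Since $\mathfrak{p}_{i_1,\ldots,i_n}=t_1^{i_1}\cdots t_n^{i_n}\mathcal{O}$, it suffices to know that $v$ is a homomorphism $F^\times\to\mathbb{Z}^n$ and that $\mathcal{O}$ is a ring. Both follow by induction on $\ell$ from the recursion \eqref{eq:valuation}: each $\operatorname{ord}_\ell$ is additive, and reduction $\mathcal{O}_{F_\ell}\to F_{\ell-1}$ is multiplicative. Hence $v(ab)=v(a)+v(b)$, and $\mathfrak{p}_{i}\mathfrak{p}_{j}=t^{i+j}\mathcal{O}\mathcal{O}\subseteq\mathfrak{p}_{i+j}$, in multi-index shorthand.

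For the index formula, multiplication by $t_1^{-i_1}\cdots t_n^{-i_n}$ is an additive isomorphism. This reduces to computing $[\mathcal{O}:\mathfrak{p}_{j_1,\ldots,j_n}]$ for $(j_1,\ldots,j_n)\ge 0$.

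\emph{Equal outer components.} In this case $(j_2,\ldots,j_n)=0$ and $j_1=m\ge0$. Consider the surjection $\rho_1:\mathcal{O}\to\mathcal{O}_{F_1}$. I first check that $\mathfrak{p}_{m,0,\ldots,0}=\rho_1^{-1}(\pi_1^m\mathcal{O}_{F_1})$. An element $a\in\mathcal{O}$ with $v_{\ge2}(a)=0$ has $v_1(a)=\operatorname{ord}_1(\rho_1(a))$ by the recursion, while elements of $\mathfrak q$ lie in every $\mathfrak{p}_{m,0,\ldots,0}$ since their valuation is larger in a higher component. Thus $\mathfrak q\subseteq\mathfrak{p}_{m,0,\ldots,0}$, and
\[
\mathcal{O}/\mathfrak{p}_{m,0,\ldots,0}\simeq\mathcal{O}_{F_1}/\pi_1^m\mathcal{O}_{F_1}.
\]
The latter has order $q^m$, because $\mathcal{O}_{F_1}$ is a complete DVR with residue field $k$ and the successive quotients $\pi_1^r\mathcal{O}_{F_1}/\pi_1^{r+1}\mathcal{O}_{F_1}\simeq k$. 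The case $m=1$ gives $\mathfrak{p}_{i}/\mathfrak{p}_{i_1+1,i_2,\ldots}\simeq(k,+)$ after the scaling.

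\emph{Larger outer components.} Let $\ell\ge2$ be the largest index with $j_\ell\ne0$, so $j_\ell>0$. Then
\[
\mathfrak{p}_{j}\subseteq\mathfrak{p}_{0,\ldots,0,1,0,\ldots,0}\quad(\text{$1$ in position $\ell$}),
\]
and it suffices to show that $\mathcal{O}/\mathfrak{p}_{0,\ldots,1_\ell,\ldots,0}$ is infinite. By the same reduction argument at level $\ell$, this quotient surjects onto $\mathcal{O}_{F_{\ell-1}}^{(\ell-1)}$. This is the rank-$(\ell-1)$ valuation ring of $F_{\ell-1}$, which contains the residue field $F_{\ell-1}$'s ring $\mathcal{O}_{F_{\ell-1}}$ reduced further. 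In fact, the quotient contains a copy of $\mathcal{O}^{(\ell-1)}(F_{\ell-1})$, which is infinite because $F_{\ell-1}$ has positive dimension $\ell-1\ge1$; for instance, the powers $\pi_1^r$ give infinitely many distinct classes.

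The main obstacle is the reduction step: making precise that for $a\in\mathcal{O}$ with top components of $v(a)$ vanishing down to level $\ell+1$, the remaining components are computed from the image in $F_{\ell}$. I would prove this by induction on $n$ directly from \eqref{eq:valuation}. The key observation is that if $\operatorname{ord}_n(a)=0$, then $v(a)=(v(\bar a),0)$ with $\bar a\in F_{n-1}$, while if $\operatorname{ord}_n(a)>0$, then $a$ is larger than everything with $\operatorname{ord}_n=0$. This gives $\mathfrak{p}_{j',0}=\{a\in\mathcal{O}_F:\bar a\in\mathfrak{p}_{j'}(F_{n-1})\}$ and $\mathfrak{p}_{j',j_n}\subseteq\mathfrak m_F$ for $j_n>0$. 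The index formula then follows by induction on $n$, using $\mathcal{O}_F/\mathfrak m_F\simeq F_{n-1}$ for the infinite case.
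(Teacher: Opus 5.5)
Your product inclusion and your equal-level computation (via $\rho_1$, with $\mathfrak q\subseteq\mathfrak{p}_{m,0,\ldots,0}$ and quotient $\mathcal{O}_{F_1}/\pi_1^m\mathcal{O}_{F_1}$) are the same as in the paper. The infinite-index case, however, rests on an inclusion that is false. The claim $\mathfrak{p}_{j_1,\ldots,j_n}\subseteq\mathfrak{p}_{0,\ldots,0,1,0,\ldots,0}$ needs $(j_1,\ldots,j_n)\ge(0,\ldots,0,1,0,\ldots,0)$. This fails when $j_\ell=1$ and the highest nonzero entry below position $\ell$ is negative. For $n=2$, take $j=(-1,1)\ge(0,0)$. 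Then $t_1^{-1}t_2\in\mathfrak{p}_{-1,1}$, but $t_1^{-1}t_2\notin\mathfrak{p}_{0,1}=t_2\mathcal{O}$. Once an outer component is positive, the right lexicographic order puts no lower bound on the inner components, so you cannot normalize them to $0$.

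The conclusion is still true, and there are two easy repairs.

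\emph{First repair.} Replace $\mathfrak{p}_{0,\ldots,1_\ell,\ldots,0}$ by the kernel $\{a\in\mathcal{O}:(v_\ell(a),\ldots,v_n(a))>(0,\ldots,0)\}$ of successive reduction $\mathcal{O}\to\mathcal{O}^{(\ell-1)}(F_{\ell-1})$. The inclusion of $\mathfrak{p}_{j_1,\ldots,j_n}$ into this kernel does hold, because truncating to the top components is order-preserving. Since the target ring is infinite, the index is infinite.

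\emph{Second repair.} Use the route in your last paragraph, with one correction. The index you need is taken in $\mathcal{O}$, so the relevant quotient is $\mathcal{O}/\mathfrak m_F\simeq\mathcal{O}^{(n-1)}(F_{n-1})$, not $\mathcal{O}_F/\mathfrak m_F\simeq F_{n-1}$. This works because $\mathfrak m_F\subseteq\mathcal{O}$.

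The paper avoids the extra reduction and the induction on $n$ altogether. If $(j_2,\ldots,j_n)>(i_2,\ldots,i_n)$, then $(j_1,\ldots,j_n)>(i_1+m,i_2,\ldots,i_n)$ for every $m\ge0$. Hence $\mathfrak{p}_{j_1,\ldots,j_n}\subseteq\mathfrak{p}_{i_1+m,i_2,\ldots,i_n}$, and the finite case already proved gives index at least $q^m$ for all $m$. This is shorter and reuses the equal-level computation directly.
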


\begin{proof}
For $a\in\mathfrak{p}_{i_1,\ldots,i_n}$ and $b\in\mathfrak{p}_{j_1,\ldots,j_n}$,
\[
v(ab)=v(a)+v(b)\ge(i_1+j_1,\ldots,i_n+j_n).
\]
At a fixed level $(i_2,\ldots,i_n)$, multiplication by $(t_1^{i_1}\cdots t_n^{i_n})^{-1}$ and successive reduction give
\[
\mathfrak{p}_{i_1,\ldots,i_n}/\mathfrak{p}_{i_1+m,i_2,\ldots,i_n}
\simeq\mathcal{O}/t_1^m\mathcal{O}
\simeq\mathcal{O}_{F_1}/\pi_1^m\mathcal{O}_{F_1}.
\]
The last quotient has $q^m$ elements. At a strictly larger level,
\[
\begin{aligned}
(j_2,\ldots,j_n)>(i_2,\ldots,i_n)
&\ \Longrightarrow\
\mathfrak{p}_{j_1,\ldots,j_n}\subseteq\mathfrak{p}_{i_1+m,i_2,\ldots,i_n}
\quad(m\ge0),\\
[\mathfrak{p}_{i_1,\ldots,i_n}:\mathfrak{p}_{j_1,\ldots,j_n}]
&\ge q^m\quad(m\ge0)
\ \Longrightarrow\
[\mathfrak{p}_{i_1,\ldots,i_n}:\mathfrak{p}_{j_1,\ldots,j_n}]=\infty.
\end{aligned}
\]
\end{proof}

Put $\Lambda_n=\mathbb{C}((X_2))\cdots((X_n))$. The order on $\mathbb R_n$ makes each $X_\ell$ positive and smaller than every positive element of $\mathbb{R}((X_2))\cdots((X_{\ell-1}))$. For $0\ne c\in\Lambda_n$, write $\operatorname{lc}(c)\in\mathbb{C}$ for the coefficient of the least exponent tuple $(i_2,\ldots,i_n)$ in the right lexicographic order. The same convention applies to real coefficients and other formal variables; a nonzero real series is positive precisely when its leading coefficient is positive.

For $a\in F^\times$ with $v(a)=(i_1,\ldots,i_n)$, we define
\begin{equation}\label{eq:modulus}
\left|a\right|_n=q^{-i_1}\prod_{\ell=2}^nX_\ell^{i_\ell},\qquad
\left|L\right|_n=\left|\det L\right|_n\quad(L\in\operatorname{GL}_d(F)).
\end{equation}
A \emph{scalar ball} is a translate of $\mathfrak{p}_{i_1,\ldots,i_n}^{\,d}$. Its absolute coordinate volume is
\begin{equation}\label{eq:absoluteball}
\operatorname{vol}(\mathfrak{p}_{i_1,\ldots,i_n}^{\,d})=q^{-di_1}\prod_{\ell=2}^nX_\ell^{di_\ell}.
\end{equation}
A translated scalar ball has the same volume.

We use the outer discrete valuation topology for analytic convergence and the inverse and implicit function theorems over complete valued fields \cite{glockner}. The inclusions
\[
t_n^{i_n+1}\mathcal{O}_F\subseteq\mathfrak{p}_{i_1,\ldots,i_n}\subseteq t_n^{i_n}\mathcal{O}_F
\]
show that the scalar balls form a closed neighborhood basis. Their quotients by $t_n^{i_n+1}\mathcal{O}_F$ are infinite discrete groups: reduction identifies them with a fractional ideal of the rank-$(n-1)$ valuation ring of $F_{n-1}$. Thus these balls are not compact, and $F$ is not locally compact.

Let $A$ be a $d$-dimensional $F$-analytic group, with $d>0$. Choose a chart $z:U\xrightarrow{\sim}V\subset F^d$ near its identity $e$, with $z(e)=0$. Multiplication and inversion have coordinate expressions
\begin{equation}\label{eq:grouplaw}
\Phi(x,y)=x+y+R(x,y),\qquad \iota(x)=-x+S(x),
\end{equation}
where $x=(x_1,\ldots,x_d)$ and $y=(y_1,\ldots,y_d)$ belong to $F^d$, with
\[
v(x_s),v(y_s)\in\mathbb{Z}^n\cup\{+\infty\}\qquad(1\le s\le d).
\]
The coordinate maps are $\Phi(x,y)=z(z^{-1}(x)z^{-1}(y))$ and $\iota(x)=z((z^{-1}(x))^{-1})$. Every monomial of $R$ contains a variable from each input, and every monomial of $S$ has degree at least two. For $g,h\in A$, put $[g,h]=ghg^{-1}h^{-1}$. For subgroups $B,C$, let $[B,C]$ be the subgroup generated by their commutators. Each monomial in the coordinate expression of the commutator contains variables from both inputs.

Write
\[
C(x,y)=z([z^{-1}(x),z^{-1}(y)])
\]
for the coordinate expression of the commutator, defined for $x,y$ sufficiently close to $0$.

\begin{lemma}\label{lem:estimate}
There are integers $\kappa_1,\ldots,\kappa_{n-1}\ge0$ and $\kappa_n\ge1$ such that, for every pair of depths satisfying
\[
(i_1,\ldots,i_n)\ge(\kappa_1,\ldots,\kappa_n),\qquad
(j_1,\ldots,j_n)\ge(\kappa_1,\ldots,\kappa_n),
\]
the coordinate expressions are defined on the indicated balls and satisfy
\begin{equation}\label{eq:estimate}
\begin{aligned}
R(\mathfrak{p}_{i_1,\ldots,i_n}^{\,d},\mathfrak{p}_{j_1,\ldots,j_n}^{\,d})&\subseteq\mathfrak{p}_{i_1+j_1-\kappa_1,\ldots,i_n+j_n-\kappa_n+1}^{\,d},\\
S(\mathfrak{p}_{i_1,\ldots,i_n}^{\,d})&\subseteq\mathfrak{p}_{2i_1-\kappa_1,\ldots,2i_n-\kappa_n+1}^{\,d},\\
C(\mathfrak{p}_{i_1,\ldots,i_n}^{\,d},\mathfrak{p}_{j_1,\ldots,j_n}^{\,d})&\subseteq\mathfrak{p}_{i_1+j_1-\kappa_1,\ldots,i_n+j_n-\kappa_n+1}^{\,d}.
\end{aligned}
\end{equation}
\end{lemma}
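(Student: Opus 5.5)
We prove \eqref{eq:estimate} by turning the power series $R$, $S$, $C$ into coefficient bounds with respect to the outer discrete valuation $\operatorname{ord}_n$, and then passing to the rank-$n$ valuation $v$. The paper uses the outer topology for analytic convergence. So there are a radius $t_n^{r}\mathcal{O}_F$ ($r\ge1$) on which $\Phi$, $\iota$ and $C$ are given by convergent power series, and a constant $c\ge0$ such that every coefficient $a_{\alpha\beta}$ of $R$, $S$, $C$ satisfies $\operatorname{ord}_n(a_{\alpha\beta})\ge -c-r'|\alpha+\beta|$ for some fixed $r'$. After rescaling coordinates by a power of $t_n$, we may assume all coefficients lie in $\mathcal{O}_F$. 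Rescaling changes the chart only by a linear map, and we would absorb it into the choice of $\kappa_n$.

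The key observation is that $\mathcal{O}_F\subseteq\mathcal{O}$. Every element of $\mathcal{O}_F$ has $v_n\ge0$, but the lower components can be arbitrarily negative, for instance $t_1^{-N}\in\mathcal{O}_F$. We therefore only have
\[
\mathcal{O}_F\,\mathfrak{p}_{i_1,\ldots,i_n}\subseteq t_n^{i_n}\mathcal{O}_F,
\]
and not an inclusion into $\mathfrak{p}_{i_1,\ldots,i_n}$. This is why the conclusion gains exactly one in the last component, $i_n+j_n-\kappa_n+1$, while the lower components are only required to be at least $i_\ell+j_\ell-\kappa_\ell$. Once the $n$th component strictly exceeds the threshold, the lower components are irrelevant in the right lexicographic order. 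Hence the target ball contains $t_n^{\,i_n+j_n-\kappa_n+1}\mathcal{O}_F$, and it suffices to prove the purely outer estimate
\[
\operatorname{ord}_n\bigl(R(x,y)\bigr)\ge i_n+j_n-\kappa_n+1 .
\]

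To obtain this, we use the hypotheses on $x$ and $y$. Depth $\ge(i_1,\ldots,i_n)$ implies $\operatorname{ord}_n(x_s)\ge i_n$, and $(i_1,\ldots,i_n)\ge(\kappa_1,\ldots,\kappa_n)$ implies $i_n\ge\kappa_n$. Every monomial of $R$ contains a variable from each input, so a monomial of bidegree $(a,b)$ with $a,b\ge1$ has
\[
\operatorname{ord}_n\ \ge\ -c'+a i_n+b j_n\ \ge\ -c'+i_n+j_n+(a+b-2)\kappa_n ,
\]
where $c'$ is the coefficient bound after rescaling. Choosing $\kappa_n\ge c'+1$ makes this at least $i_n+j_n-\kappa_n+1$, uniformly in the monomial. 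Since the ideal $t_n^m\mathcal{O}_F$ is closed in the outer topology, the same bound holds for the whole convergent sum. Taking $\kappa_n\ge r$ as well guarantees that the balls lie in the domain of convergence. The commutator $C$ is handled identically, because each of its monomials contains variables from both inputs.

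For $S$, every monomial has degree at least two, so its outer order is at least $-c'+2i_n$. Hence $S(x)\in t_n^{2i_n-\kappa_n+1}\mathcal{O}_F$, which is contained in the stated ball for the same choice of $\kappa_n$. We may take $\kappa_1=\cdots=\kappa_{n-1}=0$.

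The main obstacle is making sure that convergence and the coefficient bounds are uniform for the outer valuation, with no hidden dependence on the lower components. For this we would invoke the standard Cauchy estimate for $F$-analytic functions over the complete discretely valued field $(F,\operatorname{ord}_n)$, as in \cite{glockner}, together with the observation that a strict gain in $v_n$ alone already forces membership in the target ball.
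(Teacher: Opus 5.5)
Your reduction step is off by one, and with the constants you choose the lemma actually fails. You claim the target ball $\mathfrak{p}_{\alpha_1,\ldots,\alpha_n}$, with $\alpha_n=i_n+j_n-\kappa_n+1$, contains $t_n^{\alpha_n}\mathcal{O}_F$, so that $\operatorname{ord}_n(R(x,y))\ge\alpha_n$ would suffice. It does not. An element of $t_n^{\alpha_n}\mathcal{O}_F$ may have outer order exactly $\alpha_n$, and then its lower components decide membership. The correct sandwich is $t_n^{\alpha_n+1}\mathcal{O}_F\subseteq\mathfrak{p}_{\alpha_1,\ldots,\alpha_n}\subseteq t_n^{\alpha_n}\mathcal{O}_F$. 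Your own correct remark that $\mathcal{O}_F\,\mathfrak{p}_{i_1,\ldots,i_n}\not\subseteq\mathfrak{p}_{i_1,\ldots,i_n}$ is exactly what breaks the claimed inclusion. (Also, your ``key observation'' is reversed: $\mathcal{O}\subsetneq\mathcal{O}_F$, as $t_1^{-N}$ shows.)

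Here is a concrete failure. Take $\mathbb{G}_m$ with the identity chart $z(g)=t_1(g-1)$. Then $R(x,y)=t_1^{-1}xy$ and $S(x)=t_1^{-1}x^2-t_1^{-2}x^3+\cdots$ have all coefficients in $\mathcal{O}_F$ and converge on $t_n\mathcal{O}_F$. So $c'=0$, $r=1$, and your recipe permits $\kappa=(0,\ldots,0,1)$. Yet $x=y=t_n$ has depth $(0,\ldots,0,1)$, while $v(R(t_n,t_n))=v(S(t_n))=(-1,0,\ldots,0,2)$. This is not $\ge(0,\ldots,0,2)$, which is the index of both required target balls. The same slip affects your treatment of $S$ and $C$.

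The repair costs one more outer step: take $\kappa_n\ge\max(c'+2,r)$. Then every monomial of $R$ or $C$ has outer order at least $i_n+j_n-\kappa_n+2$, and every monomial of $S$ has outer order at least $2i_n-\kappa_n+2$. The ideals $t_n^{i_n+j_n-\kappa_n+2}\mathcal{O}_F$ and $t_n^{2i_n-\kappa_n+2}\mathcal{O}_F$ are closed, contain the sums, and lie in the respective target balls whatever the lower components are. So $\kappa_1=\cdots=\kappa_{n-1}=0$ really does work, and your rescaling merely shifts $\kappa_n$ by the rescaling exponent.

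Corrected this way, your proof is a genuinely simpler route than the paper's. The paper normalizes the coefficients as $h_\gamma\rho^{m-2}$ and isolates the finitely many with $v_n\le0$. It then reads $\kappa_1,\ldots,\kappa_{n-1}$ off their full rank-$n$ valuations, proving the termwise bound $v(\text{term})\ge v(x)+v(y)-(\kappa_1,\ldots,\kappa_{n-1},\kappa_n-1)$. In other words, the paper lets terms land exactly on the outer boundary and controls them through the lower components. You instead push every term one outer step past the boundary, so the lower components never matter. Proposition~\ref{prop:deep} and everything after it use only the existence of admissible constants, so your version would serve equally well downstream.
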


\begin{proof}
All valuation inequalities use the right lexicographic order. Coordinatewise,
\[
x\in\mathfrak{p}_{\alpha_1,\ldots,\alpha_n}^{\,d}
\ \Longleftrightarrow\
v(x_s)\ge(\alpha_1,\ldots,\alpha_n)\ (1\le s\le d)
\ \Longleftrightarrow\
v(x)\ge(\alpha_1,\ldots,\alpha_n).
\]
Shrink the chart so that all three coordinate maps are analytic. The group identities give
\[
\begin{aligned}
\Phi(x,0)=x,\quad \Phi(0,y)=y
&\ \Longrightarrow\ R(x,0)=R(0,y)=0,\\
[g,e]=[e,h]=e
&\ \Longrightarrow\ C(x,0)=C(0,y)=0.
\end{aligned}
\]
For $I=(I_1,\ldots,I_d)\in\mathbb{Z}_{\ge0}^d$, write $x^I=\prod_r x_r^{I_r}$ and $|I|=\sum_r I_r$. Uniqueness of power-series coefficients yields
\[
R_s=\sum_{I,J}a_{s,I,J}x^Iy^J,\quad
C_s=\sum_{I,J}c_{s,I,J}x^Iy^J
\quad\Longrightarrow\quad
\begin{cases}
a_{s,I,0}=a_{s,0,J}=0,\\
c_{s,I,0}=c_{s,0,J}=0.
\end{cases}
\]

For inversion, $\iota(0)=0$ and $S_s(x)=\iota_s(x)+x_s$. With $\delta_{sr}$ the Kronecker delta,
\[
\frac{\partial\Phi_s}{\partial x_r}(0,0)
=\frac{\partial\Phi_s}{\partial y_r}(0,0)=\delta_{sr}.
\]
Differentiating $\Phi_s(x,\iota(x))=0$ componentwise gives
\[
\begin{aligned}
0&=\frac{\partial\Phi_s}{\partial x_r}(0,0)
+\sum_{t=1}^d\frac{\partial\Phi_s}{\partial y_t}(0,0)
 \frac{\partial\iota_t}{\partial x_r}(0)\\
&=\delta_{sr}+\frac{\partial\iota_s}{\partial x_r}(0),\\
\frac{\partial S_s}{\partial x_r}(0)
&=\frac{\partial\iota_s}{\partial x_r}(0)+\delta_{sr}=0.
\end{aligned}
\]
Thus, including the vanishing constant terms,
\[
\begin{aligned}
R_s(x,y)&=\sum_{|I|,|J|\ge1}a_{s,I,J}x^Iy^J,\\
C_s(x,y)&=\sum_{|I|,|J|\ge1}c_{s,I,J}x^Iy^J,\\
S_s(x)&=\sum_{|K|\ge2}b_{s,K}x^K
\qquad(1\le s\le d).
\end{aligned}
\]

For the coefficient estimates below, $h_\gamma$ denotes a coefficient of one of these component series, according to
\[
(h_\gamma,\gamma,m)=
\begin{cases}
(a_{s,I,J},(I,J),|I|+|J|),&\text{for }R_s,\\
(c_{s,I,J},(I,J),|I|+|J|),&\text{for }C_s,\\
(b_{s,K},K,|K|),&\text{for }S_s.
\end{cases}
\]
Thus $m=|\gamma|\ge2$ is the total degree. The component index $s$ and the choice of series are suppressed in the notation $h_\gamma$.

Choose $\rho_0\in F^\times$ so that $D_0=(\rho_0\mathcal{O}_F)^d\subset V$, the series for $R$ and $C$ converge on $D_0\times D_0$, and the series for $S$ converge on $D_0$. Taking these closed polydiscs inside the respective convergence domains gives a common coefficient bound
\[
\exists B\ge0\quad
v_n(h_\gamma\rho_0^m)\ge-B
\qquad\text{for all component coefficients}.
\]

Fix $L\ge1$ and set $\rho=t_n^L\rho_0$, $D=(\rho\mathcal{O}_F)^d\subset D_0$. Then
\[
\begin{aligned}
v_n(h_\gamma\rho^m)&\ge-B+mL,\\
v_n(h_\gamma\rho^{m-2})&\ge-B+mL-2v_n(\rho)
\xrightarrow[m\to\infty]{}+\infty.
\end{aligned}
\]
Let $\mathcal B$ consist of the nonzero members of
\[
\left\{
a_{s,I,J}\rho^{|I|+|J|-2},\
c_{s,I,J}\rho^{|I|+|J|-2},\
b_{s,K}\rho^{|K|-2}
\right\}.
\]
Choose $M_0\ge2$ with $M_0L>B+2v_n(\rho)$. The preceding bound implies
\[
\begin{aligned}
\mathcal E&:=\{\beta\in\mathcal B:v_n(\beta)\le0\},\\
\mathcal E&\subseteq\{\text{normalized coefficients of degree at most }M_0\},\\
\#\mathcal E&<\infty.
\end{aligned}
\]
Define the component constants by
\[
\begin{aligned}
\kappa_s
&=\max\bigl(\{0\}\cup\{-v_s(\beta):\beta\in\mathcal E\}\bigr)
\qquad(1\le s<n),\\
\kappa_n
&=1+\max\bigl(\{0,v_n(\rho)\}\cup\{-v_n(\beta):\beta\in\mathcal E\}\bigr).
\end{aligned}
\]
These maxima exist even when $\mathcal E=\varnothing$, and
\[
\kappa_s\ge0\quad(1\le s<n),\qquad
\kappa_n\ge1,\qquad \kappa_n>v_n(\rho).
\]
The extra $1$ in $\kappa_n$ leaves one outer valuation step in the estimates below. In particular,
\[
\begin{cases}
v(\beta)\ge(-\kappa_1,\ldots,-\kappa_{n-1},1-\kappa_n),&\beta\in\mathcal E,\\
v(\beta)>(0,\ldots,0)\ge(-\kappa_1,\ldots,-\kappa_{n-1},1-\kappa_n),
&\beta\in\mathcal B\setminus\mathcal E.
\end{cases}
\]
Since $v(0)=+\infty$, all coefficients satisfy
\[
\begin{aligned}
v(a_{s,I,J}\rho^{|I|+|J|-2})&\ge(-\kappa_1,\ldots,-\kappa_{n-1},1-\kappa_n),\\
v(c_{s,I,J}\rho^{|I|+|J|-2})&\ge(-\kappa_1,\ldots,-\kappa_{n-1},1-\kappa_n),\\
v(b_{s,K}\rho^{|K|-2})&\ge(-\kappa_1,\ldots,-\kappa_{n-1},1-\kappa_n).
\end{aligned}
\]
These constants depend only on the component series and $\rho$.

Suppose $v(x),v(y)\ge v(\rho)$ and put $u=x/\rho$, $w=y/\rho$ coordinatewise. Then
\[
\begin{aligned}
v(u_s)&=v(x_s)-v(\rho)\ge0,\qquad v(w_s)\ge0,\\
u,w&\in\mathcal{O}^d,\qquad
x,y\in(\rho\mathcal{O})^d\subseteq D.
\end{aligned}
\]
Thus the series converge, and every monomial in $u,w$ has nonnegative valuation.

For $|I|,|J|\ge1$, choose $p,r$ with $I_p,J_r\ge1$. With $e_p,e_r$ the standard multiindices,
\[
x^I=x_p\rho^{|I|-1}u^{I-e_p},\qquad
y^J=y_r\rho^{|J|-1}w^{J-e_r}.
\]
Consequently,
\[
\begin{aligned}
a_{s,I,J}x^Iy^J
&=(a_{s,I,J}\rho^{|I|+|J|-2})x_py_ru^{I-e_p}w^{J-e_r},\\
v(a_{s,I,J}x^Iy^J)
&=v(a_{s,I,J}\rho^{|I|+|J|-2})+v(x_p)+v(y_r)\\
&\qquad+v(u^{I-e_p})+v(w^{J-e_r})\\
&\ge v(x)+v(y)-(\kappa_1,\ldots,\kappa_{n-1},\kappa_n-1).
\end{aligned}
\]
The identical factorization for $c_{s,I,J}$ gives
\[
v(c_{s,I,J}x^Iy^J)\ge v(x)+v(y)-(\kappa_1,\ldots,\kappa_{n-1},\kappa_n-1).
\]

For $|K|\ge2$, choose $p,r$ so that
\[
K-e_p-e_r\in\mathbb{Z}_{\ge0}^d:
\qquad
\begin{cases}
r=p,&K_p\ge2,\\
r\ne p,\ K_p,K_r\ge1,&\text{otherwise}.
\end{cases}
\]
Then
\[
\begin{aligned}
b_{s,K}x^K
&=(b_{s,K}\rho^{|K|-2})x_px_ru^{K-e_p-e_r},\\
v(b_{s,K}x^K)
&\ge v(x_p)+v(x_r)-(\kappa_1,\ldots,\kappa_{n-1},\kappa_n-1)\\
&\ge2v(x)-(\kappa_1,\ldots,\kappa_{n-1},\kappa_n-1).
\end{aligned}
\]
The inequalities include zero terms by the convention $v(0)=+\infty$.

For the depths in the statement, $\kappa_n>v_n(\rho)$ gives
\[
\begin{aligned}
(i_1,\ldots,i_n)&\ge(\kappa_1,\ldots,\kappa_n)>v(\rho),\\
(j_1,\ldots,j_n)&\ge(\kappa_1,\ldots,\kappa_n)>v(\rho).
\end{aligned}
\]
Hence, for $x\in\mathfrak{p}_{i_1,\ldots,i_n}^{\,d}$ and $y\in\mathfrak{p}_{j_1,\ldots,j_n}^{\,d}$,
\[
\begin{aligned}
v(a_{s,I,J}x^Iy^J),\ v(c_{s,I,J}x^Iy^J)
&\ge(i_1+j_1-\kappa_1,\ldots,i_n+j_n-\kappa_n+1),\\
v(b_{s,K}x^K)&\ge(2i_1-\kappa_1,\ldots,2i_n-\kappa_n+1).
\end{aligned}
\]
The same threshold also gives
\[
\begin{aligned}
&(i_1+j_1-\kappa_1,\ldots,i_n+j_n-\kappa_n+1)\\
&\hspace{2em}>\max\{(i_1,\ldots,i_n),(j_1,\ldots,j_n)\},\\
&(2i_1-\kappa_1,\ldots,2i_n-\kappa_n+1)>(i_1,\ldots,i_n).
\end{aligned}
\]

It remains to take limits. Each valuation ideal is additive:
\[
z_\ell\in\mathfrak{p}_{\alpha_1,\ldots,\alpha_n}\ (1\le\ell\le M)
\ \Longrightarrow\
v\left(\sum_{\ell=1}^M z_\ell\right)
\ge\min_\ell v(z_\ell)\ge(\alpha_1,\ldots,\alpha_n).
\]
It is also closed in the outer topology. Indeed, for the open subgroup
$H=t_n^{\alpha_n+1}\mathcal{O}_F$,
\[
H\subseteq\mathfrak{p}_{\alpha_1,\ldots,\alpha_n},
\qquad
F\setminus\mathfrak{p}_{\alpha_1,\ldots,\alpha_n}
=\bigcup_{a\notin\mathfrak{p}_{\alpha_1,\ldots,\alpha_n}}(a+H),
\]
so the complement is open.

Let $R_{s,\le M},C_{s,\le M},S_{s,\le M}$ denote the corresponding truncations by total degree. The termwise bounds yield
\[
\begin{aligned}
R_{s,\le M}(x,y),C_{s,\le M}(x,y)
&\in\mathfrak{p}_{i_1+j_1-\kappa_1,\ldots,i_n+j_n-\kappa_n+1},\\
S_{s,\le M}(x)&\in\mathfrak{p}_{2i_1-\kappa_1,\ldots,2i_n-\kappa_n+1}.
\end{aligned}
\]
Taking outer limits in these closed ideals gives
\[
\begin{aligned}
R_s(x,y),C_s(x,y)
&\in\mathfrak{p}_{i_1+j_1-\kappa_1,\ldots,i_n+j_n-\kappa_n+1},\\
S_s(x)&\in\mathfrak{p}_{2i_1-\kappa_1,\ldots,2i_n-\kappa_n+1}
\qquad(1\le s\le d).
\end{aligned}
\]
Combining the componentwise estimates for all $1\le s\le d$ proves the three inclusions in \eqref{eq:estimate}.
Increasing any component constant weakens the coefficient lower bounds and raises the depth threshold in the right lexicographic order, so the conclusions remain valid.
\end{proof}

\begin{proposition}\label{prop:deep}
Choose $\kappa_1,\ldots,\kappa_n$ as in Lemma~\ref{lem:estimate}. For the depths
\[
(i_1,\ldots,i_n)\ge(\kappa_1,\ldots,\kappa_n),
\]
the sets $K_{i_1,\ldots,i_n}=z^{-1}(\mathfrak{p}_{i_1,\ldots,i_n}^{\,d})$ are nested subgroups with intersection $\{e\}$. For every such depth and every integer $m\ge0$,
\begin{equation}\label{eq:deepindex}
K_{i_1,\ldots,i_n}/K_{i_1+1,i_2,\ldots,i_n}\simeq(k^d,+),\qquad
[K_{i_1,\ldots,i_n}:K_{i_1+m,i_2,\ldots,i_n}]=q^{dm}.
\end{equation}
Each $K_{i_1+m,i_2,\ldots,i_n}$ is normal in $K_{i_1,\ldots,i_n}$. For any two depths in this family,
\begin{equation}\label{eq:commutator}
[K_{i_1,\ldots,i_n},K_{j_1,\ldots,j_n}]\subseteq K_{i_1+j_1-\kappa_1,\ldots,i_n+j_n-\kappa_n+1}.
\end{equation}
For such depths with $(j_1,\ldots,j_n)\ge(i_1,\ldots,i_n)$ and $(j_2,\ldots,j_n)>(i_2,\ldots,i_n)$, the index $[K_{i_1,\ldots,i_n}:K_{j_1,\ldots,j_n}]$ is infinite.
\end{proposition}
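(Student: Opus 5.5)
Everything will follow from Lemma~\ref{lem:estimate} and Proposition~\ref{prop:ideal}, working in the coordinates $z$. Fix depths $\alpha=(i_1,\ldots,i_n)\ge(\kappa_1,\ldots,\kappa_n)$ and write $P_\alpha=\mathfrak{p}_{i_1,\ldots,i_n}^{\,d}$.

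\emph{Step 1 (subgroups, nesting, trivial intersection).} For $x,y\in P_\alpha$, Lemma~\ref{lem:estimate} gives $R(x,y)\in P_{\alpha'}$ with $\alpha'=(2i_1-\kappa_1,\ldots,2i_n-\kappa_n+1)>\alpha$, so $R(x,y)\in P_\alpha$. Hence $\Phi(x,y)=x+y+R(x,y)\in P_\alpha$. Similarly $S(x)\in P_\alpha$, so $\iota(x)=-x+S(x)\in P_\alpha$. Since $0\in P_\alpha$, the set $K_\alpha$ is a subgroup. Nesting follows from $P_\beta\subseteq P_\alpha$ for $\beta\ge\alpha$. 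The intersection of all $P_\alpha$ is $\{0\}$, because $v(x)\ge\alpha$ for all $\alpha$ forces $v(x)=+\infty$, i.e.\ $x=0$. Thus $\bigcap K_\alpha=\{e\}$.

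\emph{Step 2 (commutators).} Inclusion \eqref{eq:commutator} holds on generators directly from the third estimate of Lemma~\ref{lem:estimate}. The target depth is at least $\max(\alpha,\beta)\ge(\kappa_1,\ldots,\kappa_n)$, as shown in that proof, so the target is a subgroup by Step 1. It therefore contains the generated subgroup.

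\emph{Step 3 (the quotient by the next inner level).} Let $\alpha^+=(i_1+1,i_2,\ldots,i_n)$. I will show that the map $K_\alpha\to P_\alpha/P_{\alpha^+}$, $g\mapsto z(g)\bmod P_{\alpha^+}$, is a surjective homomorphism with kernel $K_{\alpha^+}$. It is a homomorphism because $R(x,y)\in P_{\alpha'}$ and $\alpha'\ge\alpha^+$. To check this last inequality, compare from the right: if $(i_2,\ldots,i_n)\ne(\kappa_2,\ldots,\kappa_n)$ then some outer component of $\alpha'$ strictly exceeds that of $\alpha$, and $i_n-\kappa_n+1\ge1$ handles the top component. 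The delicate case is when only the inner components can matter. This boundary verification is the main obstacle: one must check $\alpha'>\alpha$ strictly in a component at or beyond the first, which the $+1$ in the $n$-th slot guarantees, since $2i_n-\kappa_n+1>i_n$. So $\alpha'$ is strictly larger in the $n$-th component or the components agree there and a lower one is larger. In all cases $\alpha'\ge\alpha^+$, because any $\gamma>\alpha$ satisfies $\gamma\ge\alpha^+$ (the successor in the right lexicographic order fixes the outer part and increments $i_1$). Surjectivity holds because $z$ is a bijection onto $P_\alpha\subseteq V$, and the kernel is $z^{-1}(P_{\alpha^+})$ by definition. Proposition~\ref{prop:ideal} identifies $\mathfrak{p}_\alpha/\mathfrak{p}_{\alpha^+}\simeq k$, so $P_\alpha/P_{\alpha^+}\simeq k^d$. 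Normality of $K_{\alpha^+}$ follows as a kernel.

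\emph{Step 4 (general $m$, normality, infinite index).} For general $m$, the same argument with $\alpha^{+m}=(i_1+m,i_2,\ldots,i_n)$ does not directly give a homomorphism. Instead I will use the chain $K_\alpha\supseteq K_{\alpha^+}\supseteq\cdots\supseteq K_{\alpha^{+m}}$; each step has index $q^d$, so by multiplicativity of indices the total index is $q^{dm}$. For normality of $K_{\alpha^{+m}}$ in $K_\alpha$, apply \eqref{eq:commutator}: $[K_\alpha,K_{\alpha^{+m}}]\subseteq K_\gamma$ with $\gamma=(2i_1+m-\kappa_1,\ldots,2i_n-\kappa_n+1)$. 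Here $\gamma\ge\alpha^{+m}$ because the $n$-th component strictly exceeds $i_n$. Hence $K_{\alpha^{+m}}$ is normal. For the infinite index, when $(j_2,\ldots,j_n)>(i_2,\ldots,i_n)$ we have $K_\beta\subseteq K_{\alpha^{+m}}$ for every $m$, so $[K_\alpha:K_\beta]\ge q^{dm}$ for all $m$ and is therefore infinite (recall $d>0$).
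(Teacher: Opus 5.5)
Your proof is correct and follows the paper's route: reduce $z(g)$ modulo a deeper scalar ball, use Lemma~\ref{lem:estimate} to see that this reduction is a homomorphism, and read off the subgroup property, kernel, index and normality. The paper uses the reduction map to $\mathfrak{p}_{i_1,\ldots,i_n}^{\,d}/\mathfrak{p}_{i_1+m,i_2,\ldots,i_n}^{\,d}$ for every $m$ at once, where you chain the $m=1$ indices and get normality from the commutator inclusion; both are valid, though your remark that the map does not directly give a homomorphism for general $m$ is mistaken, since $2i_n-\kappa_n+1\ge i_n+1$ already puts $R(x,y)$ in $\mathfrak{p}_{i_1+m,i_2,\ldots,i_n}^{\,d}$ for all $m$, exactly as in your normality step.
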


\begin{proof}
For an allowed depth $(i_1,\ldots,i_n)$ and every $m\ge0$, right lexicographic order and $n\ge2$ give
\[
\begin{aligned}
&(2i_1-\kappa_1,\ldots,2i_n-\kappa_n+1)\\
&\hspace{2em}\ge(i_1,\ldots,i_{n-1},i_n+1)
>(i_1+m,i_2,\ldots,i_n).
\end{aligned}
\]
Consequently,
\[
\mathfrak{p}_{2i_1-\kappa_1,\ldots,2i_n-\kappa_n+1}^{\,d}
\subseteq\mathfrak{p}_{i_1+m,i_2,\ldots,i_n}^{\,d}
\subseteq\mathfrak{p}_{i_1,\ldots,i_n}^{\,d}.
\]
For $g,h\in K_{i_1,\ldots,i_n}$, put $x=z(g)$ and $y=z(h)$. Lemma~\ref{lem:estimate} yields
\[
\begin{aligned}
z(e)&=0\in\mathfrak{p}_{i_1,\ldots,i_n}^{\,d},\\
R(x,y),S(x)&\in\mathfrak{p}_{i_1+m,i_2,\ldots,i_n}^{\,d}\qquad(m\ge0),\\
z(gh)&=x+y+R(x,y)\in\mathfrak{p}_{i_1,\ldots,i_n}^{\,d},\\
z(g^{-1})&=-x+S(x)\in\mathfrak{p}_{i_1,\ldots,i_n}^{\,d}.
\end{aligned}
\]
Thus every $K_{i_1,\ldots,i_n}$ is a subgroup. Nesting and separatedness follow from
\[
\begin{aligned}
(j_1,\ldots,j_n)\ge(i_1,\ldots,i_n)
&\ \Longrightarrow\
\mathfrak{p}_{j_1,\ldots,j_n}^{\,d}\subseteq\mathfrak{p}_{i_1,\ldots,i_n}^{\,d}\\
&\ \Longrightarrow\
K_{j_1,\ldots,j_n}\subseteq K_{i_1,\ldots,i_n},\\
\bigcap_{r\ge0}\mathfrak{p}_{\kappa_1,\ldots,\kappa_{n-1},\kappa_n+r}^{\,d}
&\subseteq\bigcap_{r\ge0}(t_n^{\kappa_n+r}\mathcal{O}_F)^d=\{0\}.
\end{aligned}
\]
Since $z(e)=0$ and $z$ is injective, the intersection of all these subgroups is $\{e\}$.

Fix an allowed depth and $m\ge0$. Increasing the first component preserves the threshold:
\[
(i_1+m,i_2,\ldots,i_n)\ge(i_1,\ldots,i_n)
\ge(\kappa_1,\ldots,\kappa_n).
\]
Define
\[
\begin{aligned}
\theta_m:K_{i_1,\ldots,i_n}
&\longrightarrow
\mathfrak{p}_{i_1,\ldots,i_n}^{\,d}/\mathfrak{p}_{i_1+m,i_2,\ldots,i_n}^{\,d},\\
g&\longmapsto z(g)+\mathfrak{p}_{i_1+m,i_2,\ldots,i_n}^{\,d}.
\end{aligned}
\]
The remainder inclusion above holds for this $m$, so
\[
\begin{aligned}
\theta_m(gh)
&=z(g)+z(h)+R(z(g),z(h))
  +\mathfrak{p}_{i_1+m,i_2,\ldots,i_n}^{\,d}\\
&=\theta_m(g)+\theta_m(h).
\end{aligned}
\]
Every target class has a representative $x\in\mathfrak{p}_{i_1,\ldots,i_n}^{\,d}$, and $g=z^{-1}(x)$ belongs to $K_{i_1,\ldots,i_n}$ with $\theta_m(g)=x+\mathfrak{p}_{i_1+m,i_2,\ldots,i_n}^{\,d}$. Thus $\theta_m$ is surjective. Its kernel is determined by
\[
g\in\ker\theta_m
\ \Longleftrightarrow\ z(g)\in\mathfrak{p}_{i_1+m,i_2,\ldots,i_n}^{\,d}
\ \Longleftrightarrow\ g\in K_{i_1+m,i_2,\ldots,i_n}.
\]
Hence $K_{i_1+m,i_2,\ldots,i_n}$ is normal as the kernel of a group homomorphism. The first isomorphism theorem and the successive reductions in Proposition~\ref{prop:ideal} give
\[
\begin{aligned}
K_{i_1,\ldots,i_n}/K_{i_1+m,i_2,\ldots,i_n}
&\simeq\bigl(\mathfrak{p}_{i_1,\ldots,i_n}/\mathfrak{p}_{i_1+m,i_2,\ldots,i_n}\bigr)^d\\
&\simeq\bigl(\mathcal{O}_{F_1}/\pi_1^m\mathcal{O}_{F_1}\bigr)^d.
\end{aligned}
\]
For $m=1$ the last group is $(k^d,+)$; for every $m\ge0$ it has $(q^m)^d=q^{dm}$ elements. This proves \eqref{eq:deepindex} and all the asserted normality statements. For $m=0$, both quotients are trivial and the index is $1$.

For $g\in K_{i_1,\ldots,i_n}$ and $h\in K_{j_1,\ldots,j_n}$, Lemma~\ref{lem:estimate} gives
\[
z([g,h])=C(z(g),z(h))
\in\mathfrak{p}_{i_1+j_1-\kappa_1,\ldots,i_n+j_n-\kappa_n+1}^{\,d}.
\]
The target depth remains above the threshold, since
\[
\begin{aligned}
&(i_1+j_1-\kappa_1,\ldots,i_n+j_n-\kappa_n+1)\\
&\hspace{2em}\ge(\kappa_1,\ldots,\kappa_{n-1},\kappa_n+1)
>(\kappa_1,\ldots,\kappa_n).
\end{aligned}
\]
Thus the target is a subgroup, and
\[
\langle[g,h]:g\in K_{i_1,\ldots,i_n},\
h\in K_{j_1,\ldots,j_n}\rangle
\subseteq K_{i_1+j_1-\kappa_1,\ldots,i_n+j_n-\kappa_n+1},
\]
which is \eqref{eq:commutator}.

Finally, if $(j_2,\ldots,j_n)>(i_2,\ldots,i_n)$, then for every $m\ge0$,
\[
(j_1,\ldots,j_n)>(i_1+m,i_2,\ldots,i_n),
\qquad
K_{j_1,\ldots,j_n}\subseteq K_{i_1+m,i_2,\ldots,i_n}.
\]
The induced surjection of left-coset sets gives
\[
[K_{i_1,\ldots,i_n}:K_{j_1,\ldots,j_n}]\ge q^{dm}
\quad(m\ge0).
\]
Since $q>1$ and $d>0$, the index is infinite.
\end{proof}

The extension of subgroup volumes rests on a finite partition argument, as in Fesenko~\cite{fesenko-higher} and Waller~\cite{waller}. Let $P$ be a group and let $J\subseteq\mathbb{Z}^n$ be a nonempty set of depths, with the right lexicographic order, satisfying
\[
(i_1,\ldots,i_n)\in J,\quad m\ge0
\quad\Longrightarrow\quad(i_1+m,i_2,\ldots,i_n)\in J.
\]
For $(i_1,\ldots,i_n)\in J$, let $P_{i_1,\ldots,i_n}\le P$ be a subgroup. Assume these subgroups are nested with respect to depth. Let $B$ be a locally compact group with a strictly decreasing basis $(U_{i_1})_{i_1\in\mathbb{Z}}$ of compact open identity subgroups and Haar measure $m_B$. The cosets $hP_{i_1,\ldots,i_n}$, with $h\in P$ and $(i_1,\ldots,i_n)\in J$, are called cylinders. They are pairwise disjoint or nested, and generate the ring $\mathcal{R}(P)$.

\begin{proposition}\label{prop:extension}
Assume that the following two conditions hold.
\begin{enumerate}
\item For every pair of depths
$(i_1,\ldots,i_n),(j_1,\ldots,j_n)\in J$
with $(j_1,\ldots,j_n)\ge(i_1,\ldots,i_n)$,
\[
[P_{i_1,\ldots,i_n}:P_{j_1,\ldots,j_n}]<\infty
\quad\Longleftrightarrow\quad
(j_2,\ldots,j_n)=(i_2,\ldots,i_n).
\]
\item For every pair of depths $(i_1,i_2,\ldots,i_n),(j_1,i_2,\ldots,i_n)\in J$ with $i_1\le j_1$,
\begin{equation}\label{eq:compatibility}
m_B(U_{i_1})=[P_{i_1,\ldots,i_n}:P_{j_1,i_2,\ldots,i_n}]\,m_B(U_{j_1}).
\end{equation}
\end{enumerate}
For formal variables $Y_2,\ldots,Y_n$, the assignment
\begin{equation}\label{eq:Yweight}
\nu(hP_{i_1,\ldots,i_n})=m_B(U_{i_1})\prod_{\ell=2}^nY_\ell^{i_\ell}
\end{equation}
extends uniquely to a finitely additive left-invariant measure on $\mathcal{R}(P)$ with values in $\mathbb{R}((Y_2))\cdots((Y_n))$. Every nonempty ring set has strictly positive measure.
\end{proposition}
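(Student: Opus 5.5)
The plan is the standard common-refinement argument: every ring set is a finite disjoint union of cylinders, any two such decompositions have a common refinement that stays finite, and conditions (1)–(2) make $\nu$ additive under the refinements that are allowed.

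\emph{Step 1 (normal form).} Cylinders are pairwise disjoint or nested. Hence finite unions of cylinders are closed under intersection. A difference $hP_\alpha\setminus h'P_\beta$ with $h'P_\beta\subseteq hP_\alpha$ is again a finite disjoint union of cylinders, and I will show this in two cases.
\begin{itemize}
\item If $\beta=(j_1,i_2,\ldots,i_n)$ and $\alpha=(i_1,i_2,\ldots,i_n)$, then by (1) the index $[P_\alpha:P_\beta]$ is finite, so the difference is a finite union of cosets of $P_\beta$.
\item If the outer part of $\beta$ is strictly larger, the complement is not a finite union of such cosets. Then I will argue that $\mathcal{R}(P)$ consists exactly of finite disjoint unions of sets $C_0\setminus(C_1\cup\dots\cup C_r)$. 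Here $C_0$ is a cylinder, and the $C_i\subseteq C_0$ are disjoint cylinders of strictly larger outer level, each of finite index in the intermediate level.
\end{itemize}
Concretely, I will use the Fesenko-style description: every ring set is a finite disjoint union of "generalized" sets of the form $C_0\setminus\bigsqcup C_i$. This family is closed under differences and finite intersections, because nested-or-disjoint cylinders allow one to refine everything to a common depth along the first coordinate.

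\emph{Step 2 (definition).} I will set
\[\nu\Bigl(C_0\setminus\bigsqcup_{i=1}^r C_i\Bigr)=\nu(C_0)-\sum_{i=1}^r\nu(C_i)\]
and extend additively over disjoint unions. To show this is well defined, I will take two representations and pass to a common refinement. The only refinement operation needed is splitting a coset $hP_{i_1,\ldots,i_n}$ into its $[P_{i_1,\ldots}:P_{j_1,i_2,\ldots}]$ cosets of $P_{j_1,i_2,\ldots,i_n}$. Condition (2) gives $m_B(U_{i_1})=[\,:\,]\,m_B(U_{j_1})$, so the weight \eqref{eq:Yweight} is preserved under this splitting.

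The main obstacle is comparing representations whose removed pieces $C_i$ lie at a different outer level. The identity $C_0=\bigsqcup_i C_i\sqcup(\text{rest})$ never holds with finitely many $C_i$ of higher outer level, by (1). I will therefore prove the key lemma: if a set $C_0\setminus\bigsqcup C_i$ equals another such set, or is empty, then after refining at the common inner level the $C_0$'s coincide and the multisets of removed cylinders have equal total $\nu$. This is the point where infinite index is used: $C_0$ cannot be covered by finitely many cylinders of deeper outer level.

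\emph{Step 3 (properties).} Left invariance is immediate, since $\nu(hP_\alpha)$ does not depend on $h$. Uniqueness follows from finite additivity together with the normal form. For positivity, the value of $\nu(C_0\setminus\bigsqcup C_i)$ is
\[m_B(U_{i_1})Y^{(i_2,\ldots)}-\sum(\text{terms with strictly higher }Y\text{-exponent}).\]
Its leading coefficient, in the order of $\mathbb R((Y_2))\cdots((Y_n))$, is $m_B(U_{i_1})>0$. If instead some removed piece has the same outer level, I will first absorb it by refining $C_0$ into finitely many cosets at that level. This leaves a nonempty set, which still has a positive leading coefficient.
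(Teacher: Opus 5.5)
Your proposal follows essentially the same route as the paper. Ring sets are decomposed into shells $D\setminus\bigsqcup D_\ell$ weighted by $w(D)-\sum w(D_\ell)$. Well-definedness comes from a common refinement, resting on the fact that a finite partition of a cylinder into cylinders stays at one level, where condition (2) applies. Positivity comes from the leading coefficient after absorbing same-level holes.

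The paper makes your Step 2 precise differently from your statement that coset splitting is the only refinement needed. It compares shells of finite cylinder families $\mathcal F\subseteq\mathcal G$ through a purely combinatorial telescoping identity, and coset splitting enters only through the finite-partition identity, which gives empty shells weight zero. It proves that identity by induction on the number of pieces using the inclusion-maximal subgroup, rather than by your direct non-covering argument.
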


\begin{proof}
For $D=hP_{i_1,\ldots,i_n}$, let $w(D)$ be the value in \eqref{eq:Yweight} and set
$\operatorname{lev}(D)=(i_2,\ldots,i_n)$. Equal cylinders have equal underlying subgroups, and
\[
\begin{aligned}
P_{i_1,\ldots,i_n}=P_{j_1,\ldots,j_n}
&\ \Longrightarrow\ (i_2,\ldots,i_n)=(j_2,\ldots,j_n),\\
[P_{i_1,\ldots,i_n}:P_{j_1,\ldots,j_n}]=1
&\ \Longrightarrow\ m_B(U_{i_1})=m_B(U_{j_1})
\ \Longrightarrow\ i_1=j_1.
\end{aligned}
\]
The last implication uses strict decrease of the compact open subgroups and their Haar volumes. Thus $w$ and $\operatorname{lev}$ are well-defined.

For intersecting cylinders, choose $a$ in their intersection:
\[
D=aH,\quad E=aL,\quad
H\subseteq L\ \text{or}\ L\subseteq H
\ \Longrightarrow\
D\subseteq E\ \text{or}\ E\subseteq D.
\]

Consider a finite partition $D=\bigsqcup_{r=1}^ND_r$, with underlying subgroups $H,H_r$. For $a\in D_r$,
\[
D_r=aH_r\subseteq aH=D\ \Longrightarrow\ H_r\subseteq H.
\]
We prove $[H:H_r]<\infty$ by induction on $N$. The case $N=1$ is immediate. For $N>1$, put
\[
L=\max_{\subseteq}\{H_1,\ldots,H_N\}.
\]
Each piece lies in one $L$-coset. A piece with subgroup $L$ is an entire $L$-coset, so $L=H$ would force that piece to equal $D$. Hence
\[
2\le[H:L]\le N.
\]
For each $L$-coset $aL\subseteq D$,
\[
aL=\bigsqcup_{\{r:D_r\subseteq aL\}}D_r,
\qquad
1\le\#\{r:D_r\subseteq aL\}<N.
\]
The induction hypothesis therefore gives
\[
[L:H_r]<\infty,\qquad
[H:H_r]=[H:L][L:H_r]<\infty
\ \Longrightarrow\
\operatorname{lev}(D_r)=\operatorname{lev}(D).
\]

Write
\[
D=hP_{i_1,\ldots,i_n},\qquad
D_r=h_rP_{j_{r,1},i_2,\ldots,i_n},\qquad
a\ge\max\{i_1,j_{1,1},\ldots,j_{N,1}\}.
\]
The closure property of $J$ ensures $(a,i_2,\ldots,i_n)\in J$. Set $M=P_{a,i_2,\ldots,i_n}$,
$w_M=m_B(U_a)\prod_{\ell=2}^nY_\ell^{i_\ell}$. Counting $M$-cosets and using \eqref{eq:compatibility},
\[
\begin{aligned}
[H:M]&=\sum_{r=1}^N[H_r:M],\\
w(D)&=[H:M]w_M
=\sum_{r=1}^N[H_r:M]w_M
=\sum_{r=1}^Nw(D_r).
\end{aligned}
\]

For a shell, prescribe
\begin{equation}\label{eq:shell}
S=D\setminus\bigsqcup_{\ell=1}^mD_\ell,\qquad D_\ell\subsetneq D,
\end{equation}
\begin{equation}\label{eq:shellweight}
\nu(S)=w(D)-\sum_{\ell=1}^m w(D_\ell),
\end{equation}
where the holes are disjoint and $m=0$ is allowed. The partition identity gives
\[
S=\varnothing
\ \Longrightarrow\
D=\bigsqcup_{\ell=1}^mD_\ell
\ \Longrightarrow\
w(D)-\sum_{\ell=1}^m w(D_\ell)=0.
\]

For a finite family of sets, $\operatorname{Max}_{\subseteq}$ denotes the subfamily of inclusion-maximal members. For a finite cylinder family $\mathcal F$, define its immediate descendants and shells by
\[
\begin{aligned}
\mathcal H_{\mathcal F}(C)
&=\operatorname{Max}_{\subseteq}\{E\in\mathcal F:E\subsetneq C\},\\
S_C^{\mathcal F}
&=C\setminus\bigsqcup_{E\in\mathcal H_{\mathcal F}(C)}E,\\
W_{\mathcal F}(C)
&=w(C)-\sum_{E\in\mathcal H_{\mathcal F}(C)}w(E).
\end{aligned}
\]
Nesting gives disjoint decompositions
\[
\bigcup_{C\in\mathcal F}C=\bigsqcup_{C\in\mathcal F}S_C^{\mathcal F},
\qquad
C=\bigsqcup_{\substack{E\in\mathcal F\\E\subseteq C}}S_E^{\mathcal F}.
\]
Thus every set in $\operatorname{ring}\mathcal F$ is a union of these shells.

Let $\mathcal F\subseteq\mathcal G$ be finite cylinder families. For $C\in\mathcal G$, each $D\in\mathcal G$ with $D\subsetneq C$ has a unique immediate parent among the members of $\mathcal G$ contained in $C$: the cylinders strictly containing $D$ and contained in $C$ form a nonempty finite chain, whose smallest member is that parent. Thus each such $w(D)$ occurs exactly once in the double sum below, and
\[
\begin{aligned}
\sum_{\substack{E\in\mathcal G\\E\subseteq C}}W_{\mathcal G}(E)
&=\sum_{\substack{E\in\mathcal G\\E\subseteq C}}w(E)
-\sum_{\substack{E\in\mathcal G\\E\subseteq C}}
 \sum_{D\in\mathcal H_{\mathcal G}(E)}w(D)\\
&=\sum_{\substack{E\in\mathcal G\\E\subseteq C}}w(E)
-\sum_{\substack{D\in\mathcal G\\D\subsetneq C}}w(D)
=w(C).
\end{aligned}
\]
This identity follows directly from the weight definitions.

For $C\in\mathcal F$, put
\[
\mathcal G_C=
\{E\in\mathcal G:E\subseteq C,\ 
 E\nsubseteq D\text{ for every }D\in\mathcal H_{\mathcal F}(C)\}.
\]
The $\mathcal G$-shells partition $C$. Each old hole $D\in\mathcal H_{\mathcal F}(C)$ is the union of exactly those shells indexed by $E\subseteq D$. Removing these disjoint holes gives
\[
S_C^{\mathcal F}=\bigsqcup_{E\in\mathcal G_C}S_E^{\mathcal G}.
\]
Applying the preceding weight identity to $C$ and to each old hole gives
\[
\begin{aligned}
W_{\mathcal F}(C)
&=w(C)-\sum_{D\in\mathcal H_{\mathcal F}(C)}w(D)\\
&=\sum_{\substack{E\in\mathcal G\\E\subseteq C}}W_{\mathcal G}(E)
-\sum_{D\in\mathcal H_{\mathcal F}(C)}
 \sum_{\substack{E\in\mathcal G\\E\subseteq D}}W_{\mathcal G}(E)\\
&=\sum_{E\in\mathcal G_C}W_{\mathcal G}(E).
\end{aligned}
\]
An empty shell has weight zero by the finite partition identity already proved. Empty shells may therefore be omitted from these decompositions and sums.

For $S\in\operatorname{ring}\mathcal F$, define
\[
\nu_{\mathcal F}(S)
=\sum_{\substack{C\in\mathcal F\\
\varnothing\ne S_C^{\mathcal F}\subseteq S}}W_{\mathcal F}(C).
\]
Each nonempty $\mathcal G$-shell contained in $S$ occurs in exactly one of the disjoint refinements above. Summing their weights consequently gives
\[
\nu_{\mathcal F}(S)
=\sum_{\substack{E\in\mathcal G\\
\varnothing\ne S_E^{\mathcal G}\subseteq S}}W_{\mathcal G}(E)
=\nu_{\mathcal G}(S).
\]
Two representations therefore agree on their common refinement:
\[
\nu_{\mathcal F_1}(S)
=\nu_{\mathcal F_1\cup\mathcal F_2}(S)
=\nu_{\mathcal F_2}(S).
\]
This defines $\nu$ on $\mathcal{R}(P)$. On a common shell decomposition,
\[
S\cap T=\varnothing
\ \Longrightarrow\
\nu(S\bigsqcup T)=\nu(S)+\nu(T).
\]
Moreover, $w(aD)=w(D)$ implies $\nu(aS)=\nu(S)$. Formula \eqref{eq:shellweight} forces every extension to have these values, proving uniqueness.

For positivity, take a nonempty shell $S$ with parent $D=hP_{i_1,\ldots,i_n}$. Let $L_\ell$ underlie $D_\ell$, and set
\[
T=\{\ell:\operatorname{lev}(D_\ell)=(i_2,\ldots,i_n)\}.
\]
Choose $a$ at least as large as the first indices of the parent and these holes; set $M=P_{a,i_2,\ldots,i_n}$. The number of parent $M$-cosets left by these holes is
\[
b=[P_{i_1,\ldots,i_n}:M]-\sum_{\ell\in T}[L_\ell:M]\ge1,
\]
since $S\ne\varnothing$. Thus
\[
\nu(S)=
b\,m_B(U_a)\prod_{r=2}^nY_r^{i_r}
-\sum_{\ell\notin T}w(D_\ell).
\]
Every term in the last sum has level strictly greater than $(i_2,\ldots,i_n)$. The leading real coefficient is therefore
\[
\operatorname{lc}(\nu(S))=b\,m_B(U_a)>0
\ \Longrightarrow\ \nu(S)>0.
\]
Finally, $S=\bigsqcup_{r=1}^NS_r$ with nonempty shells gives
$\nu(S)=\sum_{r=1}^N\nu(S_r)>0$.
\end{proof}

\begin{corollary}\label{cor:analyticfamily}
Let $A$ be an $F$-analytic group of dimension $d>0$. Choose $\kappa_1,\ldots,\kappa_n$ as in Proposition~\ref{prop:deep} and set
\begin{equation}\label{eq:Hdef}
\begin{aligned}
H^z_{i_1,\ldots,i_n}
&:=K_{i_1+\kappa_1,\ldots,i_n+\kappa_n}\\
&=z^{-1}(\mathfrak{p}_{i_1+\kappa_1,\ldots,i_n+\kappa_n}^{\,d}),
\qquad (i_1,\ldots,i_n)\in\mathcal I.
\end{aligned}
\end{equation}
The family $P_{i_1,\ldots,i_n}=H^z_{i_1,\ldots,i_n}$ satisfies the hypotheses of Proposition~\ref{prop:extension} with $P=A$, $J=\mathcal I$, $B=(F_1^d,+)$, and $U_{i_1}=\pi_1^{i_1}\mathcal{O}_{F_1}^d$, where $m_B(\mathcal{O}_{F_1}^d)=1$. More precisely, for every $(i_1,\ldots,i_n)\in\mathcal I$ and $m\ge0$,
\begin{equation}\label{eq:verifiedindex}
[H^z_{i_1,\ldots,i_n}:H^z_{i_1+m,i_2,\ldots,i_n}]=q^{dm},\qquad
m_B(U_{i_1})=q^{-di_1}.
\end{equation}
\end{corollary}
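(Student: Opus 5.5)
The plan is to reduce everything to Proposition~\ref{prop:deep} by the shift $(i_1,\ldots,i_n)\mapsto(i_1+\kappa_1,\ldots,i_n+\kappa_n)$. For $(i_1,\ldots,i_n)\in\mathcal I$ we have $(i_1,\ldots,i_n)\ge(0,\ldots,0)$ in the right lexicographic order. Adding the fixed tuple $(\kappa_1,\ldots,\kappa_n)$ preserves this order, since the order is translation invariant on $\mathbb{Z}^n$. Hence $(i_1+\kappa_1,\ldots,i_n+\kappa_n)\ge(\kappa_1,\ldots,\kappa_n)$, so each $H^z_{i_1,\ldots,i_n}$ is one of the subgroups $K$ of Proposition~\ref{prop:deep}, and the family is nested.

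The closure hypothesis on $J=\mathcal I$ is immediate. Indeed, $(i_1+m,i_2,\ldots,i_n)\ge(i_1,\ldots,i_n)\ge0$ for $m\ge0$.

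Next I verify condition (1) of Proposition~\ref{prop:extension}. Take depths $(j)\ge(i)$ in $\mathcal I$; the shifted depths satisfy the same inequality. There are two cases.
\begin{itemize}
\item If $(j_2,\ldots,j_n)=(i_2,\ldots,i_n)$, then $j_1\ge i_1$. Formula \eqref{eq:deepindex} with $m=j_1-i_1$ gives the finite index $q^{d(j_1-i_1)}$.
\item If $(j_2,\ldots,j_n)>(i_2,\ldots,i_n)$, the shifted higher components satisfy the same strict inequality. The last assertion of Proposition~\ref{prop:deep} then gives infinite index.
\end{itemize}
Since $(j)\ge(i)$ forces $(j_2,\ldots,j_n)\ge(i_2,\ldots,i_n)$, these cases are exhaustive, which proves the equivalence. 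The same application of \eqref{eq:deepindex} yields the first identity in \eqref{eq:verifiedindex}.

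For the group $B$, the field $F_1$ is a one-dimensional local field, so $B=(F_1^d,+)$ is locally compact. The subgroups $U_{i_1}=\pi_1^{i_1}\mathcal{O}_{F_1}^d$ are compact open, form a basis at $0$, and are strictly decreasing. Normalize Haar measure by $m_B(\mathcal{O}_{F_1}^d)=1$. Then $m_B(U_{i_1})=q^{-di_1}$, because $[\pi_1^{a}\mathcal{O}_{F_1}^d:\pi_1^{b}\mathcal{O}_{F_1}^d]=q^{d(b-a)}$ for $a\le b$; this also covers negative $i_1$ by comparing with $\mathcal{O}_{F_1}^d$. Note that $i_1$ may indeed be negative in $\mathcal I$ when the higher components are positive.

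Condition (2) then reduces to the identity $q^{-di_1}=q^{d(j_1-i_1)}q^{-dj_1}$. I do not expect a real obstacle. The only point needing care is that the shift preserves both the threshold and the strictness of the higher-level comparison. This holds because the lexicographic order on $\mathbb{Z}^n$ is translation invariant and the shift acts componentwise.
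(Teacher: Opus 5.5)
Your proposal is correct and follows essentially the same route as the paper's proof. Both shift $\mathcal I$ into the range of Proposition~\ref{prop:deep} using the translation-invariance of the right lexicographic order, read off the index classification (finite index $q^{d(j_1-i_1)}$ at equal higher level, infinite index otherwise), compute $m_B(U_{i_1})=q^{-di_1}$ from the normalization $m_B(\mathcal{O}_{F_1}^d)=1$, and verify compatibility via $q^{d(j_1-i_1)}q^{-dj_1}=q^{-di_1}$.
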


\begin{proof}
The shift in \eqref{eq:Hdef} preserves depth comparisons and sends $\mathcal I$ into the range $(i_1+\kappa_1,\ldots,i_n+\kappa_n)\ge(\kappa_1,\ldots,\kappa_n)$. Also, $\mathcal I$ is closed under increasing $i_1$. Proposition~\ref{prop:deep} therefore gives nested subgroups and, for depths in $\mathcal I$ with $(j_1,\ldots,j_n)\ge(i_1,\ldots,i_n)$,
\[
[H^z_{i_1,\ldots,i_n}:H^z_{j_1,\ldots,j_n}]
=
\begin{cases}
q^{d(j_1-i_1)},&(j_2,\ldots,j_n)=(i_2,\ldots,i_n),\\
\infty,&(j_2,\ldots,j_n)>(i_2,\ldots,i_n).
\end{cases}
\]
In $B=(F_1^d,+)$, the subgroups $U_{i_1}=\pi_1^{i_1}\mathcal{O}_{F_1}^d$ form a strictly decreasing compact open basis. With $m_B(U_0)=1$, Haar measure gives $m_B(U_{i_1})=q^{-di_1}$ for every $i_1\in\mathbb{Z}$ \cite[Chapter~I]{weil}. Thus, for $j_1\ge i_1$,
\[
\begin{aligned}
[H^z_{i_1,\ldots,i_n}:H^z_{j_1,i_2,\ldots,i_n}]\,m_B(U_{j_1})
&=q^{d(j_1-i_1)}q^{-dj_1}\\
&=q^{-di_1}=m_B(U_{i_1}),
\end{aligned}
\]
which verifies \eqref{eq:compatibility}. The index classification verifies the other hypothesis of Proposition~\ref{prop:extension}, and taking $j_1=i_1+m$ gives \eqref{eq:verifiedindex}.
\end{proof}

\begin{theorem}\label{thm:main}
Let $G/F$ be a smooth algebraic group of dimension $d$. If $d>0$, choose an analytic chart $z:U\xrightarrow{\sim}V\subset F^d$ on an open neighborhood of the identity $e$, with $z(e)=0$. There are integers $\kappa_1,\ldots,\kappa_{n-1}\ge0$ and $\kappa_n\ge1$ such that all balls $\mathfrak{p}_{i_1+\kappa_1,\ldots,i_n+\kappa_n}^{\,d}$ with $(i_1,\ldots,i_n)\in\mathcal I$ lie in $V$, and
\[
H^z_{i_1,\ldots,i_n}:=z^{-1}(\mathfrak{p}_{i_1+\kappa_1,\ldots,i_n+\kappa_n}^{\,d}),
\qquad (i_1,\ldots,i_n)\in\mathcal I,
\]
are subgroups. Their left cosets are called \emph{congruence cylinders}. The ring
\begin{equation}\label{eq:mainring}
\mathcal{R}_z(G(F))=\operatorname{ring}\{gH^z_{i_1,\ldots,i_n}:g\in G(F),\ (i_1,\ldots,i_n)\in\mathcal I\}
\end{equation}
has a unique finitely additive left-invariant measure $\mu^0_{G,z}:\mathcal{R}_z(G(F))\to\mathbb R_n$ with
\begin{equation}\label{eq:mainweight}
\boxed{\ \mu^0_{G,z}(gH^z_{i_1,\ldots,i_n})=q^{-di_1}\prod_{\ell=2}^nX_\ell^{di_\ell}.\ }
\end{equation}
Every nonempty measurable set has strictly positive measure. For $d=0$, counting measure on finite subsets has these properties.
\end{theorem}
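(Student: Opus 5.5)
The plan is to reduce everything to Corollary~\ref{cor:analyticfamily} and Proposition~\ref{prop:extension}, after making $G(F)$ into an $F$-analytic group. First, for $d>0$: since $G$ is smooth of dimension $d$, the group $G(F)$ is a $d$-dimensional $F$-analytic manifold. Locally at any point, the implicit function theorem over the complete field $F$ \cite{glockner} applies to the defining equations of an affine open neighbourhood, whose Jacobian has rank equal to the codimension by smoothness. Multiplication and inversion are given by regular, hence analytic, maps, so $A=G(F)$ is an $F$-analytic group and the given chart $z$ has coordinate group law of the form \eqref{eq:grouplaw}. Lemma~\ref{lem:estimate} then supplies $\kappa_1,\dots,\kappa_{n-1}\ge0$ and $\kappa_n\ge1$. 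The inclusion of the balls in $V$ can be arranged at the same time. The proof of that lemma chose $\rho$ with $(\rho\mathcal{O}_F)^d\subset D_0\subset V$ and $\kappa_n>v_n(\rho)$. For $(i_1,\dots,i_n)\in\mathcal I$ we have $(i_1+\kappa_1,\dots,i_n+\kappa_n)\ge(\kappa_1,\dots,\kappa_n)$, and the inclusion $\mathfrak{p}_{\dots,\kappa_n}\subseteq t_n^{\kappa_n}\mathcal{O}_F\subseteq\rho\mathcal{O}_F$ gives $\mathfrak{p}^{\,d}_{i_1+\kappa_1,\dots,i_n+\kappa_n}\subset V$. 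Proposition~\ref{prop:deep} shows these preimages are subgroups.

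Next, Corollary~\ref{cor:analyticfamily} verifies both hypotheses of Proposition~\ref{prop:extension} for $P=G(F)$, $J=\mathcal I$, $B=(F_1^d,+)$ and $U_{i_1}=\pi_1^{i_1}\mathcal{O}_{F_1}^d$. Here the cylinders $gH^z_{i_1,\ldots,i_n}$ are exactly the congruence cylinders, and $\mathcal{R}(P)=\mathcal{R}_z(G(F))$. Proposition~\ref{prop:extension} yields a unique finitely additive, left-invariant, strictly positive measure $\nu$ with values in $\mathbb{R}((Y_2))\cdots((Y_n))$ and
\[
\nu(gH^z_{i_1,\ldots,i_n})=q^{-di_1}\prod_{\ell=2}^nY_\ell^{i_\ell}.
\]

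To obtain \eqref{eq:mainweight}, compose with the substitution $Y_\ell\mapsto X_\ell^d$. This gives a map $\sigma:\mathbb{R}((Y_2))\cdots((Y_n))\to\mathbb R_n$, constructed inductively on the number of variables. I expect the main (though modest) obstacle to be checking that $\sigma$ is a well-defined, additive, order-preserving injective ring homomorphism. At each stage it sends a Laurent series $\sum_k c_kY_\ell^k$ to $\sum_k\sigma(c_k)X_\ell^{dk}$, which is again a Laurent series with bounded-below support. Since $d>0$, exponents are rescaled monotonically, so leading terms go to leading terms, leading coefficients are preserved, and hence positivity is preserved. Then $\mu^0_{G,z}=\sigma\circ\nu$ is finitely additive, left-invariant, positive on nonempty sets, and has the boxed values.

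Uniqueness in \eqref{eq:mainweight} follows as in Proposition~\ref{prop:extension}. Every ring set is a finite disjoint union of shells, and finite additivity forces the shell values \eqref{eq:shellweight} from the cylinder values, so any such measure is determined. Finally, for $d=0$ the smooth group $G$ is étale, so $G(F)$ is discrete. The natural convention is that the cylinder subgroups are $\{e\}$, so the ring consists of finite subsets. Counting measure is then finitely additive, left-invariant, positive on nonempty sets, gives $\{g\}$ the value $1=q^{0}$, and is the unique measure with these properties by additivity over points.
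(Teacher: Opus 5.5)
Your proposal is correct and follows the paper's proof essentially step for step: Corollary~\ref{cor:analyticfamily} and Proposition~\ref{prop:extension} produce the $\mathbb{R}((Y_2))\cdots((Y_n))$-valued measure, and the order-preserving embedding $Y_\ell\mapsto X_\ell^d$ gives the boxed values and positivity. Uniqueness then comes from the shell decomposition, counting measure handles $d=0$, and your extra check that the shifted balls lie in $V$ (via $\kappa_n>v_n(\rho)$ in the proof of Lemma~\ref{lem:estimate}) correctly fills in a point the paper leaves implicit.
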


\begin{proof}[Proof of Theorem~\ref{thm:main}]
Let $A$ be any $F$-analytic group of dimension $d>0$. Corollary~\ref{cor:analyticfamily} and Proposition~\ref{prop:extension} give a finitely additive left-invariant measure $\nu$ on
\[
\mathcal{R}_z(A)=\operatorname{ring}\{aH^z_{i_1,\ldots,i_n}:a\in A,\ (i_1,\ldots,i_n)\in\mathcal I\}
\]
with cylinder values
\[
\nu(aH^z_{i_1,\ldots,i_n})=q^{-di_1}\prod_{\ell=2}^nY_\ell^{i_\ell},
\qquad S\ne\varnothing\ \Longrightarrow\ \nu(S)>0.
\]
The field embedding
\begin{equation}\label{eq:substitution}
\varepsilon_d:\mathbb{R}((Y_2))\cdots((Y_n))\hookrightarrow\mathbb R_n,
\qquad Y_\ell\longmapsto X_\ell^d\quad(2\le\ell\le n),
\end{equation}
preserves the order of exponent tuples and the leading real coefficient, since $d>0$. Hence $\mu^0_{A,z}:=\varepsilon_d\circ\nu$ is finitely additive, left-invariant, and strictly positive on nonempty ring sets, with cylinder values
\[
\mu^0_{A,z}(aH^z_{i_1,\ldots,i_n})
=q^{-di_1}\prod_{\ell=2}^nX_\ell^{di_\ell}.
\]
Every ring set is a finite disjoint union of shells, and finite additivity forces
\[
\mu\left(D\setminus\bigsqcup_{\ell=1}^mD_\ell\right)
=\mu(D)-\sum_{\ell=1}^m\mu(D_\ell).
\]
Thus the prescribed cylinder values also give uniqueness.

For $d>0$, smoothness makes $G(F)$ a $d$-dimensional $F$-analytic group, so taking $A=G(F)$ proves the assertion. For $d=0$, the identity chart is $\{e\}$; its translates generate the ring of finite subsets, on which $\mu(S)=\#S$ has the required properties.
\end{proof}

For $(i_1,\ldots,i_n)\ge(\kappa_1,\ldots,\kappa_n)$, the absolute coordinate normalization is
\begin{equation}\label{eq:absnormalization}
\begin{aligned}
\mu^{\mathrm{abs}}_{A,z}(aK_{i_1,\ldots,i_n})
&=q^{-di_1}\prod_{\ell=2}^nX_\ell^{di_\ell},\\
\mu^{\mathrm{abs}}_{A,z}
&=q^{-d\kappa_1}
\left(\prod_{\ell=2}^nX_\ell^{d\kappa_\ell}\right)\mu^0_{A,z}.
\end{aligned}
\end{equation}
Indeed, the shifted reference cylinder has absolute volume
\[
\mu^{\mathrm{abs}}_{A,z}(H^z_{0,\ldots,0})
=\mu^{\mathrm{abs}}_{A,z}(K_{\kappa_1,\ldots,\kappa_n})
=q^{-d\kappa_1}\prod_{\ell=2}^nX_\ell^{d\kappa_\ell},
\]
whereas $\mu^0_{A,z}(H^z_{0,\ldots,0})=1$.
Giving volume $1$ to a reference cylinder of depth $(j_1,\ldots,j_n)$ instead gives $q^{-d(i_1-j_1)}\prod_{\ell=2}^nX_\ell^{d(i_\ell-j_\ell)}$.

Fix a normalization and call the measure $\mu$. For a set $C$, let $\mathbf{1}_C$ be its characteristic function. Define the cylinder-simple functions and their integrals by
\begin{equation}\label{eq:simpleintegral}
\mathcal{S}_z(A;\Lambda_n)=\operatorname{span}_{\Lambda_n}\{\mathbf{1}_C:C\in\mathcal{R}_z(A)\},
\qquad
\int_A\sum_{\ell=1}^M c_\ell\mathbf{1}_{C_\ell}\,d\mu
=\sum_{\ell=1}^M c_\ell\mu(C_\ell).
\end{equation}
For two presentations $f=\sum_r c_r\mathbf{1}_{C_r}=\sum_s d_s\mathbf{1}_{D_s}$, take a common partition into nonempty atoms $E_t$. Equality on each atom gives
\[
\begin{aligned}
\sum_r c_r\mu(C_r)
&=\sum_t\left(\sum_{r:E_t\subseteq C_r}c_r\right)\mu(E_t)\\
&=\sum_t\left(\sum_{s:E_t\subseteq D_s}d_s\right)\mu(E_t)
=\sum_s d_s\mu(D_s).
\end{aligned}
\]
Thus the integral is well-defined. Left invariance gives
\[
\int_A f(a^{-1}x)\,d\mu(x)=\int_A f(x)\,d\mu(x).
\]

\section{Properties of the measure}\label{sec:properties}

Fix an analytic group $A$ of dimension $d>0$ and a family $H^z_{i_1,\ldots,i_n}$ as in \eqref{eq:Hdef}. We write $f'(a)$ for the differential of an analytic map $f$ at $a$.

\begin{proposition}\label{prop:normalization}
Every finitely additive left-invariant function $\nu$ on $\mathcal{R}_z(A)$ with values in a characteristic-zero field satisfies, for $(i_1,\ldots,i_n)\in\mathcal I$,
\begin{equation}\label{eq:cs}
\begin{aligned}
\nu(aH^z_{i_1,\ldots,i_n})&=q^{-di_1}c_{i_2,\ldots,i_n},\\
c_{i_2,\ldots,i_n}&=\nu(H^z_{0,i_2,\ldots,i_n}).
\end{aligned}
\end{equation}
Conversely, every choice of the $c_{i_2,\ldots,i_n}$, indexed by $(i_2,\ldots,i_n)\ge(0,\ldots,0)$, gives a unique such function.
\end{proposition}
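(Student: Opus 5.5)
The plan has two halves: derive the necessary form \eqref{eq:cs} from finite additivity and the index formula \eqref{eq:verifiedindex}, then build the measure for arbitrary constants by rerunning the partition argument of Proposition~\ref{prop:extension}.

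\textbf{Necessity.} Fix $(i_1,\ldots,i_n)\in\mathcal I$. By \eqref{eq:verifiedindex}, $H^z_{0,i_2,\ldots,i_n}$ is a disjoint union of $q^{di_1}$ left cosets of $H^z_{i_1,i_2,\ldots,i_n}$, since $(0,i_2,\ldots,i_n)\in\mathcal I$ whenever $(i_2,\ldots,i_n)\ge(0,\ldots,0)$. Finite additivity and left invariance then give
\[
c_{i_2,\ldots,i_n}=q^{di_1}\,\nu(H^z_{i_1,\ldots,i_n}).
\]
Because the target field has characteristic zero, the integer $q^{di_1}$ is invertible, so we may divide by it. Left invariance transfers the resulting value to every coset $aH^z_{i_1,\ldots,i_n}$. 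When $(i_2,\ldots,i_n)=(0,\ldots,0)$, the index $i_1$ is forced to be $\ge0$. When $(i_2,\ldots,i_n)>(0,\ldots,0)$, $i_1$ may be negative. In that case we decompose $H^z_{i_1,i_2,\ldots,i_n}$ into $q^{-di_1}$ cosets of $H^z_{0,i_2,\ldots,i_n}$ instead, and again obtain $q^{-di_1}c_{i_2,\ldots,i_n}$.

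\textbf{Sufficiency and uniqueness.} Given arbitrary constants $c_{i_2,\ldots,i_n}$, set $w(aH^z_{i_1,\ldots,i_n})=q^{-di_1}c_{i_2,\ldots,i_n}$. We then rerun the extension argument in the proof of Proposition~\ref{prop:extension}, dropping the positivity part. The key input is the finite partition identity: if a cylinder $D=\bigsqcup_r D_r$, then $w(D)=\sum_r w(D_r)$. That proof shows all pieces $D_r$ have the same level as $D$. Refining everything to a common deep subgroup $M=H^z_{a,i_2,\ldots,i_n}$ and counting $M$-cosets, the identity reduces to
\[
q^{-di_1}=[H:M]\,q^{-da}.
\]
This is exactly \eqref{eq:verifiedindex}, and it holds with the constant factor $c_{i_2,\ldots,i_n}$ pulled out.

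After that, the shell decomposition and common-refinement steps of Proposition~\ref{prop:extension} apply verbatim. They use only this partition identity and the nested-or-disjoint property of cylinders, never positivity or the particular form of the coefficients. They produce a well-defined, finitely additive, left-invariant $\nu$ on $\mathcal{R}_z(A)$ with values in the given field. Uniqueness follows because every ring set is a finite disjoint union of shells, and the value on a shell is forced by \eqref{eq:shellweight}.

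\textbf{Main obstacle.} The step needing most care is checking that nothing in the proof of Proposition~\ref{prop:extension} used the ordered, formal-variable structure of the values except in the positivity part. In particular, well-definedness of $w$ on cylinders must come from the group-theoretic argument via $\operatorname{lev}$ and the strict decrease of the indices. That argument holds here because equal subgroups $H^z$ have equal depths, by Proposition~\ref{prop:deep} and injectivity of $z$.
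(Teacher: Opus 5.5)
Your proposal is correct and follows essentially the same route as the paper. For necessity, the paper uses the same index computation from \eqref{eq:verifiedindex}, packaging your two cases $i_1\ge0$ and $i_1<0$ by comparing both $H^z_{i_1,i_2,\ldots,i_n}$ and $H^z_{0,i_2,\ldots,i_n}$ with $H^z_{r,i_2,\ldots,i_n}$, $r=\max\{i_1,0\}$. For the converse, it likewise reruns the shell and common-refinement construction of Proposition~\ref{prop:extension}: it checks only that same-level refinement to a common first index respects the weights $q^{-dj_1}c_{i_2,\ldots,i_n}$, and notes that positivity is never used. Your additional check that equal subgroups $H^z$ have equal depths, so the weights are well defined, is a sound detail that the paper leaves implicit.
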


\begin{proof}
Fix an allowed depth $(i_1,\ldots,i_n)\in\mathcal I$ and put $r=\max\{i_1,0\}$. Corollary~\ref{cor:analyticfamily}, finite additivity, and left invariance give
\[
\begin{aligned}
\nu(H^z_{i_1,\ldots,i_n})
&=q^{d(r-i_1)}\nu(H^z_{r,i_2,\ldots,i_n}),\\
c_{i_2,\ldots,i_n}
&=q^{dr}\nu(H^z_{r,i_2,\ldots,i_n}).
\end{aligned}
\]
Division by $q^{dr}$ is valid in characteristic zero; hence
$\nu(aH^z_{i_1,\ldots,i_n})=q^{-di_1}c_{i_2,\ldots,i_n}$.

Conversely, prescribe these cylinder values for arbitrary $c_{i_2,\ldots,i_n}$. The proof of Proposition~\ref{prop:extension} shows that every finite partition of a cylinder into cylinders has a single higher level. Refinement to a common first index $a\ge j_1$ respects the assigned weights because
\[
q^{-dj_1}c_{i_2,\ldots,i_n}
=q^{d(a-j_1)}q^{-da}c_{i_2,\ldots,i_n}.
\]
The shell and common-refinement construction in that proof therefore gives a unique finitely additive left-invariant extension. This part of the construction does not require positivity.
\end{proof}

Theorem~\ref{thm:main} chooses $c_{i_2,\ldots,i_n}=\prod_{\ell=2}^nX_\ell^{di_\ell}$. Same-level refinement preserves the integral because
\[
q^{-di_1}\prod_{\ell=2}^nX_\ell^{di_\ell}
=q^{dm}q^{-d(i_1+m)}\prod_{\ell=2}^nX_\ell^{di_\ell}.
\]
Restriction to a family with a larger depth threshold preserves the absolute normalization.

A \emph{rank-$n$ lattice} in $F^d$ is a module $L\mathcal{O}^d$ with $L\in\operatorname{GL}_d(F)$; an affine lattice coset is a translate of such a module. Put
\begin{equation}\label{eq:lattice}
\operatorname{vol}(L\mathfrak{p}_{i_1,\ldots,i_n}^{\,d})=\left|\det L\right|_n\,q^{-di_1}\prod_{\ell=2}^nX_\ell^{di_\ell}.
\end{equation}
For an affine lattice coset $a+M$, set $\operatorname{vol}(a+M)=\operatorname{vol}(M)$.
Absorb $t_1^{i_1}\cdots t_n^{i_n}$ into the lattice basis. Independence of the basis follows from
\[
\begin{aligned}
L_1\mathcal{O}^d=L_2\mathcal{O}^d
&\ \Longrightarrow\ L_2^{-1}L_1\in\operatorname{GL}_d(\mathcal{O})\\
&\ \Longrightarrow\ v(\det(L_2^{-1}L_1))=0\\
&\ \Longrightarrow\ \left|\det L_1\right|_n=\left|\det L_2\right|_n.
\end{aligned}
\]

For a lattice inclusion, elimination over the valuation ring gives coordinates with
\[
M=\mathcal{O}^d,\qquad M'=\bigoplus_{s=1}^d a_s\mathcal{O},\qquad v(a_s)\ge0.
\]
Proposition~\ref{prop:ideal} yields
\[
[M:M']<\infty
\quad\Longleftrightarrow\quad
(v_2(a_s),\ldots,v_n(a_s))=(0,\ldots,0)\quad(1\le s\le d).
\]
When these equivalent conditions hold,
\[
[M:M']=q^{\sum_s v_1(a_s)},\qquad
\operatorname{vol}(M')=[M:M']^{-1}\operatorname{vol}(M).
\]
In the infinite-index case,
\[
\sum_{s=1}^d(v_2(a_s),\ldots,v_n(a_s))>(0,\ldots,0),
\]
so $\operatorname{vol}(M')/\operatorname{vol}(M)$ has a strictly positive higher exponent tuple.

Nonempty affine intersections reduce to lattice intersections:
\[
x\in(a+M)\cap(b+M')
\ \Longrightarrow\
(a+M)\cap(b+M')=x+(M\cap M').
\]
Diagonalizing the pair gives
\[
\mathcal{O}^d\cap\bigoplus_s a_s\mathcal{O}
=\bigoplus_s(\mathcal{O}\cap a_s\mathcal{O}),
\qquad
\mathcal{O}\cap a_s\mathcal{O}\in\{\mathcal{O},a_s\mathcal{O}\}.
\]
Hence every nonempty finite intersection is again an affine lattice coset.

For $S=D\setminus\bigcup_{\ell=1}^mD_\ell$, with affine lattice holes $D_\ell\subseteq D$, prescribe the inclusion--exclusion weight
\[
w(S)=
\sum_{T\subseteq\{1,\ldots,m\}}(-1)^{|T|}
\operatorname{vol}\left(D\cap\bigcap_{\ell\in T}D_\ell\right),
\qquad \operatorname{vol}(\varnothing)=0.
\]
For $T=\varnothing$, the intersection denotes $D$.

The finite-index holes admit a common finite coset refinement. Write the cells surviving those holes as $C_1,\ldots,C_b$. In each surviving cell, every remaining hole has infinite index. Identify the cell with $\mathcal{O}^d$ and reduce coordinatewise by $\rho_1$.

For each such hole $E_r$, diagonalization supplies a nonzero affine linear polynomial $\ell_r$ over $F_1$ with
\[
\rho_1(E_r)\subseteq\{u\in F_1^d:\ell_r(u)=0\}.
\]
The ring $\mathcal{O}_{F_1}$ is infinite, so
\[
0\ne\prod_r\ell_r
\ \Longrightarrow\
\exists u\in\mathcal{O}_{F_1}^d:\prod_r\ell_r(u)\ne0
\ \Longrightarrow\
\mathcal{O}^d\setminus\bigcup_rE_r\ne\varnothing.
\]
All nonempty intersections involving one of these holes have a larger exponent tuple than that of the cell. Thus
\[
\operatorname{lc}\left(w\left(C_t\setminus\bigcup_\ell D_\ell\right)\right)
=\operatorname{lc}(\operatorname{vol}(C_t))>0.
\]
Consequently,
\[
S=\varnothing\ \Longleftrightarrow\ b=0
\ \Longrightarrow\ w(S)=0,
\qquad
S\ne\varnothing\ \Longrightarrow\ w(S)>0.
\]
Common finite Boolean refinements now give a well-defined, positive, finitely additive extension on the ring generated by affine lattice cosets. We continue to denote this extension by $\operatorname{vol}$.

\begin{lemma}\label{lem:image}
Let $f:W\to F^d$ be analytic on an open neighborhood of $0$, with $f(0)=0$ and $L=f'(0)\in\operatorname{GL}_d(F)$. There are integers $\kappa_1,\ldots,\kappa_{n-1}\ge0$ and $\kappa_n\ge1$ such that, for every depth
\[
(i_1,\ldots,i_n)\ge(\kappa_1,\ldots,\kappa_n),
\]
the ball $\mathfrak{p}_{i_1,\ldots,i_n}^{\,d}$ lies in $W$ and
\begin{equation}\label{eq:image}
f(\mathfrak{p}_{i_1,\ldots,i_n}^{\,d})=L\mathfrak{p}_{i_1,\ldots,i_n}^{\,d}.
\end{equation}
The map $f$ is a bijection between these sets. There is a neighborhood $V\subseteq W$ of $0$ such that, for every $a\in V$ and every depth $(j_1,\ldots,j_n)\ge(\kappa_1,\ldots,\kappa_n)$, the affine ball lies in $W$ and
\[
f(a+\mathfrak{p}_{j_1,\ldots,j_n}^{\,d})=f(a)+f'(a)\mathfrak{p}_{j_1,\ldots,j_n}^{\,d},
\qquad \left|\det f'(a)\right|_n=\left|\det L\right|_n.
\]
\end{lemma}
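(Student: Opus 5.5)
We reduce to a map tangent to the identity and run a contraction argument in the rank-$n$ valuation, reusing the coefficient bookkeeping from Lemma~\ref{lem:estimate}.

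\textbf{Normalization.} Put $g=L^{-1}\circ f$, so $g(x)=x+T(x)$ with every monomial of $T$ of degree at least two. Exactly as for $S$ in Lemma~\ref{lem:estimate}, choose $\rho$ and constants $\kappa_1,\ldots,\kappa_{n-1}\ge0$, $\kappa_n\ge1$ with $\kappa_n>v_n(\rho)$ such that, for all depths $\alpha=(\alpha_1,\ldots,\alpha_n)\ge(\kappa_1,\ldots,\kappa_n)$, the ball $\mathfrak{p}_\alpha^{\,d}$ lies in $W$ and the series converges on it. The same factorization of monomials, $x^K=x_px_r\rho^{|K|-2}u^{K-e_p-e_r}$, gives the Lipschitz-type bound
\[
v\bigl(T(x)-T(y)\bigr)\ge v(x-y)+v(x)\vee v(y)\text{-shift},
\]
and in particular
\[
T(x)-T(y)\in\mathfrak{p}^{\,d}_{\beta_1+\alpha_1-\kappa_1,\ldots,\beta_n+\alpha_n-\kappa_n+1}
\qquad\text{whenever } x,y\in\mathfrak{p}_\alpha^{\,d},\ x-y\in\mathfrak{p}_\beta^{\,d}.
\]
To see this, write $x^K-y^K$ as a telescoping sum of terms each containing a factor $x_p-y_p$ together with at least one further coordinate. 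Since $\alpha_n-\kappa_n+1\ge1$, this shows that $v(T(x)-T(y))>v(x-y)$ strictly, with a gain of at least one step in the outer component. The same estimate gives $T(\mathfrak{p}_\alpha^{\,d})\subseteq\mathfrak{p}_\alpha^{\,d}$, hence $g(\mathfrak{p}_\alpha^{\,d})\subseteq\mathfrak{p}_\alpha^{\,d}$, and also that $g$ is injective there.

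\textbf{Surjectivity.} This is the main obstacle, since $\mathfrak{p}_\alpha^{\,d}$ is not open and compact in a usable sense. Given $b\in\mathfrak{p}_\alpha^{\,d}$, iterate $x_0=b$ and $x_{k+1}=b-T(x_k)$. All iterates stay in $\mathfrak{p}_\alpha^{\,d}$, and by the difference estimate $v_n(x_{k+1}-x_k)\to\infty$, gaining at least $1$ per step. So the sequence is Cauchy in the outer topology. The ball $\mathfrak{p}_\alpha^{\,d}$ is closed, as shown in the proof of Lemma~\ref{lem:estimate}, and $F$ is complete, so the limit $x$ exists in $\mathfrak{p}_\alpha^{\,d}$. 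Continuity of $T$ then gives $g(x)=b$. Hence $g$ is a bijection of $\mathfrak{p}_\alpha^{\,d}$ onto itself, and $f=L\circ g$ maps $\mathfrak{p}_\alpha^{\,d}$ bijectively onto $L\mathfrak{p}_\alpha^{\,d}$, which is \eqref{eq:image}.

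\textbf{Translated balls.} For $a$ near $0$, set $f_a(x)=f(a+x)-f(a)$. Its Taylor coefficients depend analytically on $a$, and for $a$ in a small polydisc $V$ they satisfy the same uniform coefficient bound $v_n(h_\gamma\rho_0^m)\ge-B$. This holds because the expansion of $f$ converges on a slightly larger polydisc, and re-centering within the convergence domain preserves Gauss-norm bounds.

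Next write $f_a'(0)^{-1}f_a=\mathrm{id}+T_a$. The factor $f'(a)^{-1}$ is uniformly bounded on $V$, because $f'(a)=L(1+O(a))$ and $(1+O(a))^{-1}$ is given by a convergent Neumann series once $v_n(a)$ is large. Therefore a single choice of $\kappa$ works for all $a\in V$; enlarge the constants if necessary, which is allowed by the final remark of Lemma~\ref{lem:estimate}. Applying the first part of the argument to $f_a$ then yields
\[
f(a+\mathfrak{p}_j^{\,d})=f(a)+f'(a)\mathfrak{p}_j^{\,d}.
\]

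\textbf{Determinant.} Finally, $\det f'(a)=\det L\cdot\det(1+N)$ with $N=L^{-1}(f'(a)-L)$. Shrinking $V$ so that $v(N)>(0,\ldots,0)$ makes $\det(1+N)\in1+\mathfrak q$ a unit of $\mathcal{O}$ with valuation $0$. Hence $\left|\det f'(a)\right|_n=\left|\det L\right|_n$.
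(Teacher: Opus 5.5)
Your proposal is correct and follows essentially the paper's route. You normalize by the derivative to $\mathrm{id}+(\text{higher order})$, prove a difference estimate that gains at least one outer valuation step (which also gives injectivity), iterate $h\mapsto b-E_a(h)$ to a fixed point in the outer-closed ball, and treat $\det f'(a)$ as a unit perturbation of $\det L$. The only real variation is how you get uniformity in $a$: you handle $a=0$ with the coefficient bookkeeping of Lemma~\ref{lem:estimate}, then use re-centered Gauss-norm bounds and a Neumann-series bound on $f'(a)^{-1}$. This is sound because only the outer bound matters once $\kappa_n$ is enlarged so that the outer-component inequality on the normalized coefficients is strict, which makes the lower components, and hence $\kappa_1,\ldots,\kappa_{n-1}$, irrelevant; the paper instead gets the uniform estimate directly from joint continuity of the telescoped factor series $b_{sr}(a,h,k)$ at $(0,0,0)$.
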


\begin{proof}
Since $\det f'(0)=\det L\ne0$, choose a neighborhood $V_0$ of $0$ and a common outer polydisc on which $a+h\in W$ and $f'(a)$ is invertible. The maps
\[
\begin{aligned}
g_a(h)&=f'(a)^{-1}\bigl(f(a+h)-f(a)\bigr)=h+E_a(h),\\
g_a(0)&=0,\qquad g_a'(0)=\operatorname{id}_{F^d},\\
E_a(0)&=0,\qquad E_a'(0)=0
\end{aligned}
\]
are jointly analytic in $(a,h)$ for $a\in V_0$ and small $h$. Each component of $E_a(h)$ has only terms of degree at least two in $h$.

The identities
\[
u^r-v^r=(u-v)\sum_{t=0}^{r-1}u^{r-1-t}v^t
\qquad(r\ge1)
\]
and coordinatewise telescoping give, on a smaller common convergence polydisc,
\[
E_{a,s}(h)-E_{a,s}(k)
=\sum_{r=1}^d(h_r-k_r)b_{sr}(a,h,k),
\qquad b_{sr}(a,0,0)=0.
\]
The factor series are jointly analytic; after one factor $h_r-k_r$ is removed, their terms still have positive total degree in $(h,k)$. By joint continuity, shrink $V_0$ to a neighborhood $V$ of $0$ and choose integers $\kappa_1,\ldots,\kappa_{n-1}\ge0$ and $\kappa_n\ge1$ such that, for every $a\in V$ and $h,k\in\mathfrak{p}_{\kappa_1,\ldots,\kappa_n}^{\,d}$,
\[
\begin{aligned}
a+\mathfrak{p}_{\kappa_1,\ldots,\kappa_n}^{\,d}&\subseteq W,\\
b_{sr}(a,h,k)&\in t_n^2\mathcal{O}_F\qquad(1\le s,r\le d).
\end{aligned}
\]
The constants are independent of $a$. Since
\[
v(b_{sr}(a,h,k))>(0,\ldots,0,1),
\]
the valuation inequality for finite sums gives
\[
v(E_a(h)-E_a(k))
\ge v(h-k)+(0,\ldots,0,1)\qquad(h\ne k).
\]
Use the real-valued outer absolute value and maximum coordinate norm
\[
|u|_{\mathrm{out}}=2^{-v_n(u)}\quad(u\ne0),\qquad
|0|_{\mathrm{out}}=0,\qquad
\|h\|_{\mathrm{out}}=\max_{1\le s\le d}|h_s|_{\mathrm{out}}.
\]
The same coefficient bound gives, with $c=|t_n|_{\mathrm{out}}^2<1$,
\[
\|E_a(h)-E_a(k)\|_{\mathrm{out}}
\le c\|h-k\|_{\mathrm{out}}.
\]

Fix $a\in V$ and $(i_1,\ldots,i_n)\ge(\kappa_1,\ldots,\kappa_n)$. Nesting gives
\[
\mathfrak{p}_{i_1,\ldots,i_n}^{\,d}
\subseteq\mathfrak{p}_{\kappa_1,\ldots,\kappa_n}^{\,d},
\]
so all preceding estimates hold on this ball. For $b\in\mathfrak{p}_{i_1,\ldots,i_n}^{\,d}$, put $T_{a,b}(h)=b-E_a(h)$. Taking $k=0$ and using $E_a(0)=0$ gives
\[
\begin{aligned}
h\in\mathfrak{p}_{i_1,\ldots,i_n}^{\,d}
&\ \Longrightarrow\
v(E_a(h))\ge v(h)+(0,\ldots,0,1)\\
&\ \Longrightarrow\
E_a(h),\ g_a(h),\ T_{a,b}(h)
\in\mathfrak{p}_{i_1,\ldots,i_n}^{\,d}.
\end{aligned}
\]
The case $h=0$ follows directly from $E_a(0)=0$.

Starting with $h^{(0)}=0$, set $h^{(r+1)}=T_{a,b}(h^{(r)})$ for $r\ge0$. The self-map property keeps all iterates in the ball, and the contraction estimate implies
\[
\begin{aligned}
\|h^{(r+1)}-h^{(r)}\|_{\mathrm{out}}
&\le c^r\|h^{(1)}-h^{(0)}\|_{\mathrm{out}},\\
\|h^{(s)}-h^{(r)}\|_{\mathrm{out}}
&\le\max_{r\le t<s}\|h^{(t+1)}-h^{(t)}\|_{\mathrm{out}}\\
&\le c^r\|h^{(1)}-h^{(0)}\|_{\mathrm{out}}
\qquad(s>r).
\end{aligned}
\]
Thus the iterates are Cauchy. Since $F^d$ is outer-complete and the ball is outer-closed, they converge to some $h$ in the same ball. Continuity of $T_{a,b}$ gives
\[
h=T_{a,b}(h)
\ \Longleftrightarrow\ h+E_a(h)=b
\ \Longleftrightarrow\ g_a(h)=b.
\]
For two solutions $h,h'$,
\[
\|h-h'\|_{\mathrm{out}}
\le c\|h-h'\|_{\mathrm{out}},\qquad c<1
\ \Longrightarrow\ h=h'.
\]
This proves bijectivity of $g_a$ on every ball at the stated depths. Undoing its normalization gives, bijectively,
\[
\begin{aligned}
g_a(\mathfrak{p}_{i_1,\ldots,i_n}^{\,d})
&=\mathfrak{p}_{i_1,\ldots,i_n}^{\,d},\\
f(a+\mathfrak{p}_{i_1,\ldots,i_n}^{\,d})
&=f(a)+f'(a)\mathfrak{p}_{i_1,\ldots,i_n}^{\,d}.
\end{aligned}
\]
Taking $a=0$, with $f(0)=0$ and $f'(0)=L$, proves \eqref{eq:image} as well.

Finally, shrink $V$ so that
\[
\frac{\det f'(a)}{\det L}\in1+\mathfrak m_F
\ \Longrightarrow\
v\left(\frac{\det f'(a)}{\det L}\right)=(0,\ldots,0)
\ \Longrightarrow\
\left|\det f'(a)\right|_n=\left|\det L\right|_n.
\]
The middle implication follows by successive reduction of an element with outer residue $1$.
\end{proof}

For identity charts $z,z'$, put $f=z'\circ z^{-1}$ and $L=f'(0)$. Let $\lambda_z,\lambda_{z'}$ be copies of the lattice measure $\operatorname{vol}$ on the respective coordinate domains; thus $\lambda_z(C)=\operatorname{vol}(C)$ for a measurable coordinate set $C$.

On a sufficiently small overlap, Lemma~\ref{lem:image} gives, for $C=a+\mathfrak{p}_{j_1,\ldots,j_n}^{\,d}$,
\[
\begin{aligned}
f(C)&=f(a)+f'(a)\mathfrak{p}_{j_1,\ldots,j_n}^{\,d},\\
\lambda_{z'}(f(C))
&=\left|\det f'(a)\right|_n\,\lambda_z(C)
=\left|\det L\right|_n\,\lambda_z(C).
\end{aligned}
\]
Let $\mathcal D_z$ be the ring generated by these scalar balls, and write $f(\mathcal D_z)=\{f(S):S\in\mathcal D_z\}$. Finite additivity gives the pushforward identity
\[
(f_*\lambda_z)(S):=\lambda_z(f^{-1}(S))
=\left|\det L\right|_n^{-1}\lambda_{z'}(S)
\qquad(S\in f(\mathcal D_z)).
\]

For a nonzero left-invariant top-degree form $\omega$ on $A$, write its expression in the chart $z$ as
\[
\omega=h_z(x)\,dx_1\wedge\cdots\wedge dx_d.
\]
After shrinking the chart,
\[
\frac{h_z(x)}{h_z(0)}\in1+\mathfrak m_F
\ \Longrightarrow\
\left|h_z(x)\right|_n=\left|h_z(0)\right|_n.
\]
Transport the weighted coordinate measure to the group by $z^{-1}$. In these coordinates, define $\lambda_\omega$ by
\begin{equation}\label{eq:gauge}
d\lambda_\omega=\left|h_z(0)\right|_n\,d\lambda_z.
\end{equation}
The coordinate transformation law gives
\[
\begin{aligned}
h_z(0)&=h_{z'}(0)\det L,\\
\left|h_z(0)\right|_n\,f_*\lambda_z
&=\left|h_{z'}(0)\right|_n\left|\det L\right|_n
  \left|\det L\right|_n^{-1}\lambda_{z'}\\
&=\left|h_{z'}(0)\right|_n\,\lambda_{z'}.
\end{aligned}
\]
Thus \eqref{eq:gauge} agrees on the corresponding rings $\mathcal D_z$ and $f(\mathcal D_z)$ after transport to the group.

To identify this local measure with the cylinder measure, put $\psi_a(x)=z(az^{-1}(x))$ for $a$ near $e$. Left invariance of $\omega$ gives
\[
h_z(z(a))\det\psi_a'(0)=h_z(0),
\qquad \left|\det\psi_a'(0)\right|_n=1.
\]
The contraction argument of Lemma~\ref{lem:image} applies uniformly to the jointly analytic family $\psi_a(x)-\psi_a(0)$ after shrinking the identity neighborhood and increasing the component threshold. For all corresponding depths $(i_1,\ldots,i_n)\ge(\kappa_1,\ldots,\kappa_n)$,
\[
\begin{aligned}
z(aK_{i_1,\ldots,i_n})
&=z(a)+\psi_a'(0)\mathfrak{p}_{i_1,\ldots,i_n}^{\,d},\\
\lambda_z\bigl(z(aK_{i_1,\ldots,i_n})\bigr)
&=\left|\det\psi_a'(0)\right|_n\operatorname{vol}(\mathfrak{p}_{i_1,\ldots,i_n}^{\,d})
=\operatorname{vol}(\mathfrak{p}_{i_1,\ldots,i_n}^{\,d}).
\end{aligned}
\]
Finite additivity identifies the weighted coordinate measure with $\left|h_z(0)\right|_n\mu^{\mathrm{abs}}_{A,z}$ on this local cylinder ring. Left translation extends this identification to all congruence cylinders at these depths.

For a parameter change at stage $2\le\ell\le n$, let
\[
\pi_\ell'=a\pi_\ell,\quad a\in\mathcal{O}_{F_\ell}^{\times},
\qquad
v(\bar a)=(c_1,\ldots,c_{\ell-1}).
\]
For an element with original valuation $(i_1,\ldots,i_n)$, let $(i_1',\ldots,i_n')$ be its valuation for the new parameters, and let $X_2',\ldots,X_n'$ be the corresponding formal variables. Keeping lower parameters fixed gives
\[
i_s'=
\begin{cases}
i_s-c_si_\ell,&s<\ell,\\
i_s,&s\ge\ell.
\end{cases}
\]
The order-preserving coefficient isomorphism is
\begin{equation}\label{eq:parameter}
X_\ell'\longmapsto q^{-c_1}\Bigl(\prod_{s=2}^{\ell-1}X_s^{c_s}\Bigr)X_\ell,
\qquad X_s'\longmapsto X_s\quad(s\ne\ell).
\end{equation}
Its effect on the varying factors is
\[
q^{-i_1'}\prod_{s=2}^{\ell-1}X_s^{i_s'}
\left(q^{-c_1}\prod_{s=2}^{\ell-1}X_s^{c_s}X_\ell\right)^{i_\ell}
=q^{-i_1}\prod_{s=2}^{\ell}X_s^{i_s}.
\]
The remaining factors are unchanged. Iteration identifies all moduli and volumes; changing lifts of a fixed residue parameter leaves the valuation unchanged.

For a smooth connected affine group $G/F$, put
\[
\operatorname{Lie} G=T_eG,\qquad
\delta_G(g)=\det(\operatorname{Ad}(g);\operatorname{Lie} G),\qquad R_g(x)=xg.
\]
Here $\operatorname{Ad}(g)$ is the derivative at $e$ of $x\mapsto gxg^{-1}$, and $R_g^*$ denotes pullback of differential forms by $R_g$. Fix $g\in G(F)$ and put
\[
c_g(h)=g^{-1}hg,\qquad f_g=z\circ c_g\circ z^{-1},\qquad L_g=f_g'(0).
\]
Increase the component constants $\kappa_1,\ldots,\kappa_n$, depending on $g$, so that Lemma~\ref{lem:image} applies to both $f_g$ and $f_{g^{-1}}$. For every depth $(i_1,\ldots,i_n)\ge(\kappa_1,\ldots,\kappa_n)$,
\[
z(g^{-1}K_{i_1,\ldots,i_n}g)=L_g\mathfrak{p}_{i_1,\ldots,i_n}^{\,d},
\qquad \det L_g=\delta_G(g)^{-1}.
\]
The conjugated family is the scalar-ball family for the chart $z\circ c_g^{-1}$. Right translation is understood between the ring generated by the original cylinders and the ring generated by their right translates, each equipped with its corresponding gauge measure. The lattice volume formula and left invariance give
\[
\begin{aligned}
K_{i_1,\ldots,i_n}g&=g(g^{-1}K_{i_1,\ldots,i_n}g),\\
R_g^*\omega&=\delta_G(g)^{-1}\omega,\\
\lambda_\omega(Sg)&=\left|\delta_G(g)\right|_n^{-1}\lambda_\omega(S).
\end{aligned}
\]
The last identity extends from cylinders to their ring by finite additivity. Applying it with $g^{-1}$ to a cylinder indicator gives
\[
\int\mathbf{1}_S(xg)\,d\lambda_\omega(x)
=\lambda_\omega(Sg^{-1})
=\left|\delta_G(g)\right|_n\,\lambda_\omega(S).
\]
For every finite linear combination $f$ of indicators of cylinders above this threshold, finite linearity gives
\begin{equation}\label{eq:modularintegral}
\int f(xg)\,d\lambda_\omega(x)
=\left|\delta_G(g)\right|_n\int f(x)\,d\lambda_\omega(x).
\end{equation}
For connected reductive $G$, over a splitting field the nonzero adjoint weights on a maximal torus $T$ occur in opposite pairs \cite[Chapter~21]{milne}:
\[
\delta_G|_T=\prod_{\alpha\in R^+}\alpha\,\alpha^{-1}=1
\ \Longrightarrow\ \delta_G=1
\ \Longrightarrow\ \lambda_\omega(Sg)=\lambda_\omega(S).
\]
Here $R^+$ is a positive root system; characters of a connected reductive group are determined by their restriction to $T$.

For product integration over valuation fields, see \cite{morrow-product,mori}. Let $A_1,A_2$ be $F$-analytic groups of dimensions $d_1,d_2$, with cylinder rings $\mathcal{R}_1,\mathcal{R}_2$ and measures $\mu_1,\mu_2$. On the ring generated by rectangles, set
\begin{equation}\label{eq:product}
(\mu_1\otimes\mu_2)(C_1\times C_2)=\mu_1(C_1)\mu_2(C_2),
\qquad C_s\in\mathcal{R}_s.
\end{equation}
Common finite disjoint refinements in both factors give rectangular grids, so finite additivity of $\mu_1,\mu_2$ makes this prescription well-defined, positive, and left-invariant. For a finite sum $f=\sum_{r,s}c_{rs}\mathbf{1}_{E_r\times F_s}$ on such a grid,
\[
\begin{aligned}
\int_{A_1\times A_2}f\,d(\mu_1\otimes\mu_2)
&=\sum_{r,s}c_{rs}\mu_1(E_r)\mu_2(F_s)\\
&=\int_{A_1}\left(\int_{A_2}f\,d\mu_2\right)d\mu_1
=\int_{A_2}\left(\int_{A_1}f\,d\mu_1\right)d\mu_2.
\end{aligned}
\]
With common component shifts and reference volumes $1$, the cylinder weights multiply according to $d_1+d_2$. Thus the product measure restricts to the measure of the product chart on its congruence cylinder ring.

\begin{proposition}\label{prop:quotient}
Let $\pi:A\to Q$ be an analytic homomorphism of finite-dimensional $F$-analytic groups with surjective derivative at the identity. Put $N=\ker\pi$, $d_N=\dim N$, $d_A=\dim A$, and $d_Q=\dim Q$. There are identity charts $z_N,z_A,z_Q$ and common integers $\kappa_1,\ldots,\kappa_{n-1}\ge0$ and $\kappa_n\ge1$ such that, for every depth
\[
(i_1,\ldots,i_n)\ge(\kappa_1,\ldots,\kappa_n),
\]
the subgroups
\[
\begin{aligned}
K^N_{i_1,\ldots,i_n}&:=z_N^{-1}(\mathfrak{p}_{i_1,\ldots,i_n}^{\,d_N}),\\
K^A_{i_1,\ldots,i_n}&:=z_A^{-1}(\mathfrak{p}_{i_1,\ldots,i_n}^{\,d_A}),\\
K^Q_{i_1,\ldots,i_n}&:=z_Q^{-1}(\mathfrak{p}_{i_1,\ldots,i_n}^{\,d_Q}),
\end{aligned}
\]
fit into exact sequences
\begin{equation}\label{eq:exact}
1\longrightarrow K^N_{i_1,\ldots,i_n}\longrightarrow K^A_{i_1,\ldots,i_n}
\xrightarrow{\pi}K^Q_{i_1,\ldots,i_n}\longrightarrow1.
\end{equation}
The dimensions satisfy $d_A=d_N+d_Q$, and the finite inner indices multiply.
\end{proposition}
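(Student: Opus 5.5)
The plan is to build the three charts so that $\pi$ becomes the linear coordinate projection exactly, and then read off exactness from the scalar-ball structure. Since $\pi'(e)$ is surjective, the implicit function theorem over the complete field $F$ \cite{glockner} shows that $N$ is an analytic subgroup of dimension $d_N=d_A-d_Q$ near $e$. That settles the dimension identity.

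\emph{Charts.} Fix any identity chart $z_Q$ of $Q$. Choose a complement $W\subseteq\operatorname{Lie}A$ to $\ker\pi'(e)$, so that $\pi'(e)|_W$ is an isomorphism. Fix an identity chart $z_N$ of $N$. Define a local analytic map
\[
\Theta:(x,y)\longmapsto z_N^{-1}(x)\,\sigma(y),\qquad x\in F^{d_N},\ y\in F^{d_Q},
\]
where $\sigma$ is an analytic local section of $\pi$ with $\pi(\sigma(y))=z_Q^{-1}(y)$ and $\sigma(0)=e$. Such a $\sigma$ exists by the implicit function theorem applied on $W$. The differential of $\Theta$ at $0$ is invertible, because it is the direct sum of $\operatorname{Lie}N$ and $W$. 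Hence $z_A:=\Theta^{-1}$ is an identity chart of $A$, and it satisfies
\[
z_Q(\pi(z_A^{-1}(x,y)))=y .
\]
So in these coordinates $\pi$ is the projection $(x,y)\mapsto y$. Also $z_A^{-1}(x,0)=z_N^{-1}(x)$, since $\pi(\Theta(x,y))=z_Q^{-1}(y)$ shows that $\Theta(x,y)\in N$ exactly when $y=0$.

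\emph{Constants.} Apply Lemma~\ref{lem:estimate} and Proposition~\ref{prop:deep} to each of the three charts. Take the componentwise maximum of the resulting constants, which is allowed by the final remark in the proof of Lemma~\ref{lem:estimate}. Shrink further so that all balls at these depths lie in the chart domains. Then for $(i_1,\ldots,i_n)\ge(\kappa_1,\ldots,\kappa_n)$ all three families $K^\bullet_{i_1,\ldots,i_n}$ are subgroups.

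\emph{Exactness.} Write $\mathfrak{p}=\mathfrak{p}_{i_1,\ldots,i_n}$ and use $\mathfrak{p}^{d_A}=\mathfrak{p}^{d_N}\times\mathfrak{p}^{d_Q}$ in the split coordinates.
\begin{itemize}
\item[] \emph{Image.} The projection maps $\mathfrak{p}^{d_A}$ onto $\mathfrak{p}^{d_Q}$, so $\pi(K^A)=K^Q$.
\item[] \emph{Kernel.} An element of $K^A\cap N$ has coordinates $(x,0)$ with $x\in\mathfrak{p}^{d_N}$, so it equals $z_N^{-1}(x)\in K^N$. Conversely, $K^N=z_A^{-1}(\mathfrak{p}^{d_N}\times 0)\subseteq K^A$.
\end{itemize}
This gives \eqref{eq:exact}.

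\emph{Indices.} Multiplicativity of the finite inner indices follows from exactness. Alternatively, Proposition~\ref{prop:deep} gives $q^{d_Am}=q^{d_Nm}q^{d_Qm}$ directly. Either way,
\[
[K^A_{i}:K^A_{i+m}]=[K^N_{i}:K^N_{i+m}]\,[K^Q_{i}:K^Q_{i+m}] ,
\]
where $i+m$ denotes increasing the first index by $m$. The quotient identity comes from
\[
K^A_i/K^A_{i+m}\to K^Q_i/K^Q_{i+m},
\]
which is surjective with kernel $K^N_iK^A_{i+m}/K^A_{i+m}\simeq K^N_i/K^N_{i+m}$.

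\emph{Main obstacle.} The hardest step is making the chart $z_A$ genuinely adapted, so that $\pi$ is exactly linear and $N$ is exactly a coordinate subspace. Only then do the ball preimages match without error terms. If $N$ is a priori only a closed subgroup, I would first invoke the regular-value theorem to make it a submanifold near $e$, and only then build $\Theta$.
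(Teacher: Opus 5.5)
Your proposal is correct and follows essentially the paper's proof. Both arguments take a local section $\sigma$ of $\pi$ and use the product chart $(h,y)\mapsto h\sigma(y)$, so that $\pi$ becomes a coordinate projection and $N$ a coordinate subspace; both then choose common constants and read exactness off from $\mathfrak{p}_{i_1,\ldots,i_n}^{\,d_A}=\mathfrak{p}_{i_1,\ldots,i_n}^{\,d_N}\times\mathfrak{p}_{i_1,\ldots,i_n}^{\,d_Q}$. The only differences are cosmetic: you invert $\Theta$ by the inverse function theorem, whereas the paper writes the inverse explicitly as $a\mapsto(\eta(a),\pi(a))$ with $\eta(a)=a\sigma(\pi(a))^{-1}$; and Lemma~\ref{lem:estimate} and Proposition~\ref{prop:deep} require positive dimension, so a zero-dimensional $N$ or $Q$ should be treated separately (its coordinate subgroup is $\{e\}$ and its index is $1$), as the paper does.
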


\begin{proof}
Let $e_A,e_Q$ be the identity elements of $A,Q$. By the implicit function theorem, $N=\pi^{-1}(e_Q)$ is an analytic submanifold near $e_A$, and there is a local section $\sigma:W_Q\to A$ on an identity neighborhood $W_Q\subseteq Q$ with
\[
\begin{aligned}
T_{e_A}N&=\ker\pi'(e_A),\\
\pi\circ\sigma&=\operatorname{id}_{W_Q},\qquad \sigma(e_Q)=e_A,\\
\pi'(e_A)\sigma'(e_Q)&=\operatorname{id}_{T_{e_Q}Q}.
\end{aligned}
\]
Translations by $N$ extend its local analytic structure. On tangent spaces,
\[
T_{e_A}A=\ker\pi'(e_A)\oplus\sigma'(e_Q)(T_{e_Q}Q),
\qquad
d_A=d_N+d_Q.
\]

Choose identity charts $z_N$ on an open neighborhood $W_N\subseteq N$ and $z_Q$ on $W_Q$, shrinking $W_Q$ if necessary. Define
\[
\eta:\pi^{-1}(W_Q)\longrightarrow N,
\qquad \eta(a)=a\sigma(\pi(a))^{-1}.
\]
Indeed,
\[
\pi(\eta(a))
=\pi(a)\bigl(\pi(\sigma(\pi(a)))\bigr)^{-1}
=\pi(a)\pi(a)^{-1}=e_Q.
\]
The group operations, $\pi$, and $\sigma$ are analytic. Since $N$ is an embedded analytic submanifold, $\eta$ is analytic as an $N$-valued map. Therefore
\[
\Omega=\{a\in\pi^{-1}(W_Q):\eta(a)\in W_N\}
\]
is an open identity neighborhood in $A$.

For $(h,y)\in W_N\times W_Q$, put $\Psi(h,y)=h\sigma(y)$. Then
\[
\pi(\Psi(h,y))=y,\qquad
\eta(\Psi(h,y))=h\sigma(y)\sigma(y)^{-1}=h,
\]
so $\Psi$ takes values in $\Omega$. The two compositions are
\[
\begin{aligned}
(\eta,\pi)(\Psi(h,y))&=(h,y),\\
\Psi(\eta(a),\pi(a))
&=a\sigma(\pi(a))^{-1}\sigma(\pi(a))=a.
\end{aligned}
\]
Both maps are analytic, so they are inverse analytic isomorphisms between $W_N\times W_Q$ and $\Omega$. Consequently,
\[
z_A(a)=\bigl(z_N(\eta(a)),z_Q(\pi(a))\bigr)
\]
is an identity chart with values in $F^{d_N}\times F^{d_Q}$. Since $\sigma(e_Q)=e_A$, its restrictions satisfy
\[
\begin{aligned}
z_A(h)&=(z_N(h),0)\qquad(h\in W_N),\\
z_Q(\pi(a))&=\operatorname{pr}_2(z_A(a)),\\
a\in N&\ \Longleftrightarrow\ \operatorname{pr}_2(z_A(a))=0,
\end{aligned}
\]
where $\operatorname{pr}_2$ is projection onto the second coordinate factor.

Apply Lemma~\ref{lem:estimate} to the positive-dimensional factors and choose common component bounds $\kappa_1,\ldots,\kappa_{n-1}\ge0$ and $\kappa_n\ge1$. Enlarge them so that the balls of depth $(\kappa_1,\ldots,\kappa_n)$ lie in the three chart domains. For every $(i_1,\ldots,i_n)\ge(\kappa_1,\ldots,\kappa_n)$, nesting places the corresponding balls in those domains, and Proposition~\ref{prop:deep} gives subgroups with
\[
\begin{aligned}
\mathfrak{p}_{i_1,\ldots,i_n}^{\,d_A}
&=\mathfrak{p}_{i_1,\ldots,i_n}^{\,d_N}\times\mathfrak{p}_{i_1,\ldots,i_n}^{\,d_Q},\\
K^A_{i_1,\ldots,i_n}
&=\{h\sigma(y):h\in K^N_{i_1,\ldots,i_n},\
y\in K^Q_{i_1,\ldots,i_n}\}.
\end{aligned}
\]
Consequently,
\[
\begin{aligned}
\pi(K^A_{i_1,\ldots,i_n})&=K^Q_{i_1,\ldots,i_n},\\
\ker(\pi|_{K^A_{i_1,\ldots,i_n}})
&=K^A_{i_1,\ldots,i_n}\cap N
=K^N_{i_1,\ldots,i_n}.
\end{aligned}
\]
Together with injectivity of the inclusion, these identities prove \eqref{eq:exact}.

Finally, for $m\ge0$,
\[
\begin{aligned}
[K^A_{i_1,\ldots,i_n}:K^A_{i_1+m,i_2,\ldots,i_n}]
&=q^{d_Am}=q^{d_Nm}q^{d_Qm}\\
&=[K^N_{i_1,\ldots,i_n}:K^N_{i_1+m,i_2,\ldots,i_n}]\\
&\qquad{}\cdot[K^Q_{i_1,\ldots,i_n}:K^Q_{i_1+m,i_2,\ldots,i_n}].
\end{aligned}
\]
For a zero-dimensional factor, $F^0=\{0\}$, its coordinate subgroup is the identity singleton, and its index is $q^{0m}=1$.
\end{proof}

Choose positive constants $c_N,c_A,c_Q\in\mathbb R_n$ with $c_A=c_Nc_Q$, and set
\[
\mu_N=c_N\mu^{\mathrm{abs}}_{N,z_N},\qquad
\mu_A=c_A\mu^{\mathrm{abs}}_{A,z_A},\qquad
\mu_Q=c_Q\mu^{\mathrm{abs}}_{Q,z_Q},
\]
using the corresponding constant times counting measure for any zero-dimensional factor. The dimension identity and the absolute cylinder volumes then give
\begin{equation}\label{eq:quotientnormalization}
\mu_A(K^A_{i_1,\ldots,i_n})=\mu_N(K^N_{i_1,\ldots,i_n})\mu_Q(K^Q_{i_1,\ldots,i_n}).
\end{equation}
Smooth quotient morphisms of algebraic groups satisfy the hypotheses above.

\begin{proposition}\label{prop:disintegration}
For the families and normalizations above, let $f$ be a finite linear combination of indicators $\mathbf{1}_{aK^A_{i_1,\ldots,i_n}}$ with $(i_1,\ldots,i_n)\ge(\kappa_1,\ldots,\kappa_n)$. For $y\in\pi(A)$, choose $a_y\in A$ with $\pi(a_y)=y$ and set
\[
(\pi_!f)(y)=\int_N f(a_yh)\,d\mu_N(h),
\]
with value zero outside $\pi(A)$. This is independent of the lift, is cylinder-simple on $Q$, and satisfies
\begin{equation}\label{eq:disintegration}
\int_A f\,d\mu_A=\int_Q\pi_!f\,d\mu_Q.
\end{equation}
\end{proposition}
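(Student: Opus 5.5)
By linearity of both sides in $f$, it suffices to treat a single indicator $f=\mathbf{1}_{aK}$ with $K=K^A_{i_1,\ldots,i_n}$ and $(i_1,\ldots,i_n)\ge(\kappa_1,\ldots,\kappa_n)$. Linearity applies because $\pi_!$ is visibly linear in $f$ once lift-independence is established, and both integrals are linear.

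\emph{Lift-independence.} Two lifts differ by right multiplication by an element $h_0\in N$, since $\pi(a_y)=\pi(a_y')$ means $a_y'=a_yh_0$. The integral is then unchanged by the left invariance of $\mu_N$, applied to $h\mapsto f(a_yh_0h)$. The restriction $f(a_y\,\cdot\,)$ to $N$ is a finite combination of indicators of sets $N\cap a_y^{-1}aK$, so we must check that these lie in the cylinder ring of $N$. If $N\cap a_y^{-1}aK$ is nonempty, pick $h_1$ in it. Then $a_y^{-1}aK=h_1K$, and
\[
N\cap h_1K=h_1(N\cap K)=h_1K^N_{i_1,\ldots,i_n}
\]
by the exact sequence \eqref{eq:exact}. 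Hence
\[
(\pi_!\mathbf{1}_{aK})(y)=
\begin{cases}
\mu_N(K^N_{i_1,\ldots,i_n}),& a_y^{-1}aK\cap N\ne\varnothing,\\
0,&\text{otherwise}.
\end{cases}
\]

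\emph{Identifying the support.} The condition $a_y^{-1}aK\cap N\ne\varnothing$ holds exactly when $y\in\pi(aK)=\pi(a)\pi(K)=\pi(a)K^Q_{i_1,\ldots,i_n}$, again using surjectivity in \eqref{eq:exact}. Therefore
\[
\pi_!\mathbf{1}_{aK}=\mu_N(K^N_{i_1,\ldots,i_n})\,\mathbf{1}_{\pi(a)K^Q_{i_1,\ldots,i_n}},
\]
which is cylinder-simple on $Q$. For a zero-dimensional $Q$ the cylinder is the singleton $\{\pi(a)\}$, and the same formula holds with counting measure. The case of zero-dimensional $N$ is analogous.

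\emph{Comparing integrals.} Integrating gives
\[
\int_Q\pi_!\mathbf{1}_{aK}\,d\mu_Q=\mu_N(K^N_{i_1,\ldots,i_n})\,\mu_Q(K^Q_{i_1,\ldots,i_n})=\mu_A(K^A_{i_1,\ldots,i_n})=\int_A\mathbf{1}_{aK}\,d\mu_A,
\]
where the middle equality is \eqref{eq:quotientnormalization} and the other two use left invariance.

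\emph{Main obstacle.} The delicate point is the well-definedness of $\pi_!$ on a general combination $f$, which may be written with cylinders of different depths. This is handled by linearity once each indicator's image is computed as above. Well-definedness on $A$ follows from the well-definedness of the integral \eqref{eq:simpleintegral} on $N$ for each fixed $y$. The measure $\mu_N$ is understood on the ring generated by cosets $hK^N_{j_1,\ldots,j_n}$ at depths at least $(\kappa_1,\ldots,\kappa_n)$. This ring sits inside the shifted cylinder ring used to define $\mu^{\mathrm{abs}}_{N,z_N}$ because the $\kappa$ were chosen no smaller than those of Proposition~\ref{prop:deep}. Enlarging the thresholds preserves the absolute normalization, as noted after Proposition~\ref{prop:normalization}.
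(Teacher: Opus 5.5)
Your proposal is correct and follows essentially the same route as the paper. You compute $\pi_!\mathbf{1}_{aK^A_{i_1,\ldots,i_n}}=\mu_N(K^N_{i_1,\ldots,i_n})\mathbf{1}_{\pi(a)K^Q_{i_1,\ldots,i_n}}$ from the exactness in \eqref{eq:exact} and left invariance on $N$, obtain lift-independence by the substitution $h\mapsto h_0h$, and conclude from \eqref{eq:quotientnormalization} and linearity; your extra check that the cosets of depth at least $(\kappa_1,\ldots,\kappa_n)$ in $N$ lie in the cylinder ring of $\mu^{\mathrm{abs}}_{N,z_N}$ makes explicit a point the paper leaves implicit.
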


\begin{proof}
Take $f=\mathbf{1}_{aK^A_{i_1,\ldots,i_n}}$. Exactness gives
\[
a_yh\in aK^A_{i_1,\ldots,i_n}\ (h\in N)
\ \Longrightarrow\
y\in\pi(a)K^Q_{i_1,\ldots,i_n}.
\]
Conversely, for $y$ in this last cylinder, choose
\[
k_0\in K^A_{i_1,\ldots,i_n},\qquad
\pi(k_0)=\pi(a)^{-1}y.
\]
Then
\[
h_0=a_y^{-1}ak_0\in N,\qquad a_yh_0=ak_0,
\]
and
\[
\begin{aligned}
a_yh\in aK^A_{i_1,\ldots,i_n}
&\ \Longleftrightarrow\
(a_yh_0)^{-1}(a_yh)\in K^A_{i_1,\ldots,i_n}\\
&\ \Longleftrightarrow\
h_0^{-1}h\in K^A_{i_1,\ldots,i_n}\cap N\\
&\ \Longleftrightarrow\
h\in h_0K^N_{i_1,\ldots,i_n}.
\end{aligned}
\]
Left invariance on $N$ therefore yields
\[
\pi_!\mathbf{1}_{aK^A_{i_1,\ldots,i_n}}
=\mu_N(K^N_{i_1,\ldots,i_n})
 \mathbf{1}_{\pi(a)K^Q_{i_1,\ldots,i_n}}.
\]
Its support lies in $\pi(A)$, and it is cylinder-simple.

For a different lift, write $a_y'=a_yn_0$ with $n_0\in N$. The substitution $h'=n_0h$ gives
\[
\int_N f(a_y'h)\,d\mu_N(h)
=\int_N f(a_yn_0h)\,d\mu_N(h)
=\int_N f(a_yh')\,d\mu_N(h').
\]
This proves independence of the lift. Finite linearity gives
\[
\pi_!\left(\sum_{r=1}^M c_rf_r\right)
=\sum_{r=1}^M c_r\pi_!f_r.
\]
The integral on $N$ depends on the function itself, so different presentations of $f$ give the same pushforward.

For a cylinder indicator, \eqref{eq:quotientnormalization} gives
\[
\begin{aligned}
\int_Q\pi_!\mathbf{1}_{aK^A_{i_1,\ldots,i_n}}\,d\mu_Q
&=\mu_N(K^N_{i_1,\ldots,i_n})\mu_Q(K^Q_{i_1,\ldots,i_n})\\
&=\mu_A(K^A_{i_1,\ldots,i_n})\\
&=\int_A\mathbf{1}_{aK^A_{i_1,\ldots,i_n}}\,d\mu_A.
\end{aligned}
\]
Summation over the finitely many terms of $f$ proves \eqref{eq:disintegration}.
\end{proof}

\section{Examples}\label{sec:examples}

Let $\mu_G$ be the measure with absolute normalization in the coordinates used below. For $d=\dim G$, its cylinder volumes are $\mu_G(K_{i_1,\ldots,i_n})=q^{-di_1}\prod_{\ell=2}^nX_\ell^{di_\ell}$. For the component shifts $\kappa_1,\ldots,\kappa_n$, the normalization of Theorem~\ref{thm:main} is
\[
\mu^0_{G,z}
=q^{d\kappa_1}\left(\prod_{\ell=2}^nX_\ell^{-d\kappa_\ell}\right)\mu_G.
\]
For additive groups the formula holds at every depth.

The explicit coordinates for $\operatorname{GL}_m$, $\operatorname{SL}_m$ ($m\ge2$), the Heisenberg group, and the affine group below also give subgroups at every depth with $i_n>0$. Write $K_{i_1,\ldots,i_n}$ for any of these coordinate subgroups and $d$ for its dimension. The nonlinear terms are integral in these coordinates, so
\[
\begin{gathered}
\mathfrak{p}_{i_1,\ldots,i_n}\mathfrak{p}_{i_1,\ldots,i_n}\subseteq\mathfrak{p}_{i_1+1,i_2,\ldots,i_n},\\
K_{i_1,\ldots,i_n}/K_{i_1+1,i_2,\ldots,i_n}\simeq(k^d,+).
\end{gathered}
\]
The index argument of Proposition~\ref{prop:deep} therefore applies throughout $J=\{(i_1,\ldots,i_n):i_n>0\}$. Proposition~\ref{prop:extension}, with $B=F_1^d$ and $U_{i_1}=\pi_1^{i_1}\mathcal{O}_{F_1}^d$, supplies the measures on these larger cylinder rings after substituting $Y_\ell=X_\ell^d$. Their restrictions to sufficiently deep tuple ranges agree with the absolute normalization by uniqueness.

\begin{example}\label{ex:equal-characteristic}\label{ex:mixed-characteristic}
\emph{(i) Equal characteristic.} Let
\[
F=k((t_1))\cdots((t_n)),\qquad |k|=q.
\]
With $F_{n-1}=k((t_1))\cdots((t_{n-1}))$, the balls have the recursive description
\[
\mathfrak{p}_{i_1,\ldots,i_n}
=t_n^{i_n}\mathfrak{p}_{i_1,\ldots,i_{n-1}}(F_{n-1})
+t_n^{i_n+1}F_{n-1}[[t_n]],
\]
starting from $\mathfrak{p}_{i_1}(F_1)=t_1^{i_1}k[[t_1]]$. Thus, for $F=k((u))((t))((s))$,
\[
\mathfrak{p}_{i_1,i_2,i_3}
=u^{i_1}t^{i_2}s^{i_3}k[[u]]
+t^{i_2+1}s^{i_3}k((u))[[t]]
+s^{i_3+1}k((u))((t))[[s]].
\]
The additive measure satisfies
\[
\mu_{F^d}(a+\mathfrak{p}_{i_1,\ldots,i_n}^{\,d})=q^{-di_1}\prod_{\ell=2}^nX_\ell^{di_\ell}.
\]
For $d=1$, choosing the leading coefficient in $k$ gives the disjoint partition
\[
\mathfrak{p}_{i_1,\ldots,i_n}
=\bigsqcup_{\alpha\in k}
\bigl(\alpha t_1^{i_1}\cdots t_n^{i_n}+\mathfrak{p}_{i_1+1,i_2,\ldots,i_n}\bigr),
\]
and hence
\[
\mu_F(\mathfrak{p}_{i_1,\ldots,i_n}\setminus\mathfrak{p}_{i_1+1,i_2,\ldots,i_n})
=(1-q^{-1})q^{-i_1}\prod_{\ell=2}^nX_\ell^{i_\ell}.
\]
For $2\le\ell\le n$, an outer-component shell instead has volume
\[
\mu_F(\mathfrak{p}_{i_1,\ldots,i_n}\setminus\mathfrak{p}_{i_1,\ldots,i_\ell+1,\ldots,i_n})
=q^{-i_1}\Bigl(\prod_{s=2}^nX_s^{i_s}\Bigr)(1-X_\ell)>0.
\]
In dimension three, for every integer $a\ge0$,
\begin{equation}\label{eq:evaluation}
\mu_F(\mathfrak{p}_{a,0,0}\setminus\mathfrak{p}_{0,1,0})=q^{-a}-X_2>0,
\qquad
\mu_F(\mathfrak{p}_{0,a,0}\setminus\mathfrak{p}_{0,0,1})=X_2^a-X_3>0.
\end{equation}
The scalar-ball volumes agree with Fesenko's normalization \cite{fesenko,fesenko-higher} in dimensions two and higher.

\emph{(ii) Mixed characteristic.} Let $K/\mathbb Q_p$ be finite, with uniformizer $\pi$ and residue field $k$ of order $q$. For
\[
F=K((t_2))\cdots((t_n))
\]
the parameter sequence is $(\pi,t_2,\ldots,t_n)$ and $F_1=K$, so
\[
\mu_F(\pi^{i_1}t_2^{i_2}\cdots t_n^{i_n}\mathcal{O})
=q^{-i_1}X_2^{i_2}\cdots X_n^{i_n}.
\]

For comparison, let $K\{\{u\}\}$ consist of series $\sum_{s\in\mathbb{Z}}a_su^s$ such that the valuations of the $a_s\in K$ are bounded below and tend to $+\infty$ as $s\to-\infty$. Its discrete valuation is the minimum coefficient valuation, and its residue field is $k((u))$ \cite{zhukov,morrow-higher}. The field
\[
F=K\{\{u\}\}((t_3))\cdots((t_n))
\]
has the parameter sequence $(u,\pi,t_3,\ldots,t_n)$. Consequently,
\[
\mu_F(u^{i_1}\pi^{i_2}t_3^{i_3}\cdots t_n^{i_n}\mathcal{O})
=q^{-i_1}X_2^{i_2}\cdots X_n^{i_n}.
\]

\end{example}

\begin{example}\label{ex:matrix-groups}
Write $I_m$ for the identity matrix and $M_m(S)$ for the $m\times m$ matrices over $S$. For $\operatorname{GL}_m(F)$, put $C_{i_1,\ldots,i_n}=I_m+M_m(\mathfrak{p}_{i_1,\ldots,i_n})$ at positive outer depth. Matrix entries give $m^2$ identity coordinates, so
\[
\begin{gathered}
\mu_{\operatorname{GL}_m}(gC_{i_1,\ldots,i_n})=q^{-m^2i_1}\prod_{\ell=2}^nX_\ell^{m^2i_\ell},
\\
[C_{i_1,\ldots,i_n}:C_{i_1+a,i_2,\ldots,i_n}]=q^{m^2a}\quad(a\ge0).
\end{gathered}
\]
For $n=2$ and $m=2$, let $\nu_2^{\mathrm W}$ denote the $\mathbb{R}((X))$-valued, finitely additive, left-invariant measure constructed by Waller~\cite[Theorem~4.3]{waller-gl2} on the ring generated by his congruence cosets in $\operatorname{GL}_2(F)$. The superscript $\mathrm W$ stands for Waller, and the subscript records the matrix size.

His subgroups are $K_{i,j}=C_{i,j}$, and his normalization gives
\[
\nu_2^{\mathrm W}(\operatorname{GL}_2(\mathcal{O}))=1,
\qquad
\nu_2^{\mathrm W}(gC_{i,j})
=\frac{q^3}{(q^2-1)(q-1)}q^{-4i}X^{4j}.
\]
On the cylinder ring with $j>0$, identifying $X=X_2$ therefore gives
\[
\mu_{\operatorname{GL}_2}
=\frac{(q^2-1)(q-1)}{q^3}\nu_2^{\mathrm W}.
\]
For arbitrary $m$, let $\nu_m^{\mathrm W}$ denote Waller's $\mathbb{R}((Y_2))\cdots((Y_n))$-valued, finitely additive, left-invariant measure on the ring generated by congruence cosets in $\operatorname{GL}_m(F)$, constructed in~\cite[Section~4, pp.~19--20]{waller}. Here $m$ is the matrix size and $n$ is the field dimension; $\mu_{\operatorname{GL}_m}$ retains our absolute coordinate normalization.

The normalization of $\nu_m^{\mathrm W}$ is induced by Haar measure $h_m$ on $\operatorname{GL}_m(F_1)$ with
\[
h_m\bigl(\operatorname{GL}_m(\mathcal{O}_{F_1})\bigr)=1,
\qquad
U_a=I_m+\pi_1^aM_m(\mathcal{O}_{F_1})\quad(a\ge1).
\]
Reduction modulo $\pi_1$ and the successive congruence quotients give
\[
\begin{aligned}
\#\operatorname{GL}_m(k)
&=\prod_{r=0}^{m-1}(q^m-q^r)
=q^{m(m-1)/2}\prod_{r=1}^m(q^r-1),\\
[\operatorname{GL}_m(\mathcal{O}_{F_1}):U_a]
&=\#\operatorname{GL}_m(k)\,q^{m^2(a-1)},\\
h_m(U_a)
&=c_mq^{-m^2a},
\end{aligned}
\]
where
\[
c_m=\frac{q^{m^2}}{\#\operatorname{GL}_m(k)}
=\frac{q^{m(m+1)/2}}{\prod_{r=1}^m(q^r-1)}
=\prod_{r=1}^m(1-q^{-r})^{-1}.
\]
Writing $Y_2,\ldots,Y_n$ for Waller's level variables, his cylinder formula becomes
\[
\nu_m^{\mathrm W}(gC_{i_1,\ldots,i_n})
=c_mq^{-m^2i_1}\prod_{\ell=2}^nY_\ell^{i_\ell}
\qquad(i_n>0).
\]
Consequently,
\[
\begin{gathered}
Y_\ell\longmapsto X_\ell^{m^2}\quad(2\le\ell\le n),\\
c_m^{-1}\left.\nu_m^{\mathrm W}(S)\right|_{Y_\ell=X_\ell^{m^2}}
=\mu_{\operatorname{GL}_m}(S).
\end{gathered}
\]
Here $S$ lies in the ring generated by the common cylinders of positive outer depth; finite additivity extends the equality from cylinders to this ring.

For $m\ge2$, the group $\operatorname{SL}_m(F)=\{g\in\operatorname{GL}_m(F):\det g=1\}$ has the following local parametrization in every residue characteristic:
\[
g=\begin{pmatrix}B&\beta\\ \eta&d_0\end{pmatrix},
\qquad
d_0=(\det B)^{-1}+\eta B^{-1}\beta,
\]
where $B\in\operatorname{GL}_{m-1}(F)$, $\beta$ is a column, and $\eta$ is a row. Thus $B-I_{m-1},\beta,\eta$ give $m^2-1$ coordinates, and
\begin{equation}\label{eq:SLvolume}
\mu_{\operatorname{SL}_m}\bigl(g(C_{i_1,\ldots,i_n}\cap\operatorname{SL}_m(F))\bigr)
=q^{-(m^2-1)i_1}\prod_{\ell=2}^nX_\ell^{(m^2-1)i_\ell}.
\end{equation}
The group $\operatorname{SL}_1(F)=\{1\}$ has counting measure. The determinant sequence
\[
1\longrightarrow C_{i_1,\ldots,i_n}\cap\operatorname{SL}_m(F)
\longrightarrow C_{i_1,\ldots,i_n}
\xrightarrow{\det}1+\mathfrak{p}_{i_1,\ldots,i_n}\longrightarrow1
\]
has a section $a\mapsto\operatorname{diag}(a,1,\ldots,1)$. Write $\det_!$ for the fiber-integration operator $\pi_!$ of Proposition~\ref{prop:disintegration} with $\pi=\det$. The volumes multiply, giving
\[
\det_!\mathbf{1}_{gC_{i_1,\ldots,i_n}}
=q^{-(m^2-1)i_1}\prod_{\ell=2}^nX_\ell^{(m^2-1)i_\ell}
\mathbf{1}_{\det(g)(1+\mathfrak{p}_{i_1,\ldots,i_n})}.
\]
Positive outer depth allows negative inner components. For instance, over $k((t_1))((t_2))((t_3))((t_4))$ at depth $(1,-2,0,1)$,
\[
\mu_{\operatorname{GL}_2}(C_{1,-2,0,1})=q^{-4}X_2^{-8}X_4^4,
\qquad
\mu_{\operatorname{SL}_2}(C_{1,-2,0,1}\cap\operatorname{SL}_2(F))=q^{-3}X_2^{-6}X_4^3.
\]

\end{example}

The following lemma and proposition give a common description of sufficiently deep matrix congruences for smooth closed subgroups of $\operatorname{GL}_m$.

\begin{lemma}\label{lem:tangent}
Let $L:F^d\hookrightarrow F^N$ be injective and $F$-linear. There are $M\in\operatorname{GL}_d(F)$ and $U\in\operatorname{GL}_N(\mathcal{O})$ such that
\[
ULM=\begin{pmatrix}I_d\\0\end{pmatrix}.
\]
Both $U$ and $U^{-1}$ preserve every $\mathfrak{p}_{i_1,\ldots,i_n}^{\,N}$.
\end{lemma}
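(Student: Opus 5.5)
The plan is Gaussian elimination over the rank-$n$ valuation ring $\mathcal{O}$, which is a valuation ring: for any two elements, one divides the other. Elimination uses only permutations and unipotent integral row operations, so the resulting $U$ lies in $\operatorname{GL}_N(\mathcal{O})$. First I would settle the last assertion, since it holds for every $U\in\operatorname{GL}_N(\mathcal{O})$. If $x\in\mathfrak{p}_{i_1,\ldots,i_n}^{\,N}$, each coordinate of $Ux$ is a finite sum of products $u_{sr}x_r$ with $v(u_{sr})\ge(0,\ldots,0)$. Proposition~\ref{prop:ideal} and additivity of the ideal then put $Ux$ back in $\mathfrak{p}_{i_1,\ldots,i_n}^{\,N}$. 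The same argument applies to $U^{-1}$, which also has entries in $\mathcal{O}$. Hence $U$ and $U^{-1}$ restrict to mutually inverse maps, and $U$ preserves each ball with equality.

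For the normal form I would induct on $d$, working with the $N\times d$ matrix $L$ of full column rank. Among all entries of the first column, none of which can all vanish, choose one of minimal valuation in the right lexicographic order. Move it to position $(1,1)$ by a permutation matrix, which lies in $\operatorname{GL}_N(\mathcal{O})$.

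Call the pivot $p$. For every row $s>1$, the quotient $L_{s1}/p$ has valuation $\ge(0,\ldots,0)$ by minimality, so it lies in $\mathcal{O}$. The row operation subtracting $(L_{s1}/p)$ times row $1$ is therefore a unipotent matrix in $\operatorname{GL}_N(\mathcal{O})$. After these operations, the first column is $p e_1$.

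Next, column operations clear the rest of the first row and scale the pivot to $1$. These operations are right multiplications by elements of $\operatorname{GL}_d(F)$, which are allowed to be arbitrary and can be absorbed into $M$. The result has the block form $\begin{pmatrix}1&0\\0&L'\end{pmatrix}$, where $L'$ is $(N-1)\times(d-1)$ and still injective.

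Applying the induction hypothesis to $L'$ gives $U'$ and $M'$. Embed them block-diagonally, and note that block matrices with entries in $\mathcal{O}$ stay in $\operatorname{GL}(\mathcal{O})$. Composing all row operations into $U$ and all column operations into $M$ yields $ULM=\begin{pmatrix}I_d\\0\end{pmatrix}$. The base case $d=0$ is trivial.

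The only point needing care is the pivot choice. The order on $\mathbb{Z}^n$ is total, so a minimal valuation exists among finitely many nonzero entries. The key input is that $v(a)\le v(b)$ implies $b/a\in\mathcal{O}$, which follows from $v(b/a)=v(b)-v(a)\ge 0$ and the definition of $\mathcal{O}$. Once this is in place, the rest is routine bookkeeping.
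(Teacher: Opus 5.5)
Your proof is correct, but it takes a different route from the paper.

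The paper does not induct. It chooses, in one step, a $d\times d$ submatrix $B$ of $L$ whose determinant has minimal valuation among all nonzero maximal minors. It permutes rows so that $PL=\begin{pmatrix}B\\D\end{pmatrix}$, and sets $M=B^{-1}$ and $U=\begin{pmatrix}I_d&0\\-C&I_{N-d}\end{pmatrix}P$ with $C=DB^{-1}$. Integrality of $C$ comes from Cramer's rule: each entry $C_{sr}$ is either zero or another maximal minor of $L$ divided by $\det B$, and that minor has valuation at least $v(\det B)$.

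You instead choose pivots locally, one column at a time (partial pivoting). You use only the divisibility property $v(a)\le v(b)\Rightarrow b/a\in\mathcal{O}$ for single entries, and you absorb all column operations into the unrestricted $M$. This avoids determinants altogether and is the familiar elimination algorithm over a valuation ring. The price is the induction, and your $U$ comes out as an interleaved product of permutations and elementary integral matrices.

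The paper's minor-based choice is non-inductive and yields a sharper normal form: one permutation followed by one block-unipotent integral matrix. Equivalently, after permuting coordinates, the image of $L$ is the graph of the $\mathcal{O}$-integral linear map $C:F^d\to F^{N-d}$.

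Your treatment of the last assertion matches the paper's: any $U\in\operatorname{GL}_N(\mathcal{O})$ and its inverse map each $\mathfrak{p}_{i_1,\ldots,i_n}^{\,N}$ into itself, hence onto it.
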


\begin{proof}
For a set $I$ of $d$ row indices, let $L_I$ be the corresponding square submatrix. Since $\operatorname{rank}L=d$, choose a row permutation $P$ such that
\[
PL=\begin{pmatrix}B\\D\end{pmatrix},\qquad
v(\det B)=\min_{\det L_I\ne0}v(\det L_I).
\]
Put $M=B^{-1}$ and $C=DB^{-1}$. If $B[r\leftarrow D_s]$ is obtained by replacing row $r$ by row $s$ of $D$, Cramer's rule gives
\[
C_{sr}=\frac{\det B[r\leftarrow D_s]}{\det B},
\qquad
v(\det B[r\leftarrow D_s])\ge v(\det B)
\ \Longrightarrow\ C_{sr}\in\mathcal{O}.
\]
A zero numerator is covered by $v(0)=+\infty$. Now set
\[
U=\begin{pmatrix}I_d&0\\-C&I_{N-d}\end{pmatrix}P,\qquad
U^{-1}=P^{-1}\begin{pmatrix}I_d&0\\C&I_{N-d}\end{pmatrix}.
\]
Both matrices are integral, and
\[
ULM=
\begin{pmatrix}I_d&0\\-C&I_{N-d}\end{pmatrix}
\begin{pmatrix}B\\D\end{pmatrix}B^{-1}
=\begin{pmatrix}I_d\\-C+DB^{-1}\end{pmatrix}
=\begin{pmatrix}I_d\\0\end{pmatrix}.
\]
For every depth, the $\mathcal{O}$-module property yields
\[
U^{\pm1}\mathfrak{p}_{i_1,\ldots,i_n}^{\,N}
\subseteq\mathfrak{p}_{i_1,\ldots,i_n}^{\,N}
\ \Longrightarrow\
U\mathfrak{p}_{i_1,\ldots,i_n}^{\,N}=\mathfrak{p}_{i_1,\ldots,i_n}^{\,N}.
\]
In particular,
\[
v(Ux)\ge v(x)=v(U^{-1}Ux)\ge v(Ux),
\qquad v(Ux)=v(x).
\]
\end{proof}

\begin{proposition}\label{prop:matrix}
Let $H\hookrightarrow\operatorname{GL}_m$ be a smooth closed algebraic $F$-subgroup of dimension $d>0$, and set
\[
C^H_{i_1,\ldots,i_n}=H(F)\cap\bigl(I_m+M_m(\mathfrak{p}_{i_1,\ldots,i_n})\bigr).
\]
There are identity coordinates $z$, integers $\kappa_1,\ldots,\kappa_{n-1}\ge0$, and an integer $\kappa_n\ge1$ such that, for every depth $(i_1,\ldots,i_n)\in\mathbb{Z}^n$ satisfying
\[
(i_1,\ldots,i_n)\ge(\kappa_1,\ldots,\kappa_n)
\quad\text{in the right lexicographic order},
\]
one has
\begin{equation}\label{eq:matrixidentity}
C^H_{i_1,\ldots,i_n}=z^{-1}(\mathfrak{p}_{i_1,\ldots,i_n}^{\,d}).
\end{equation}
The left translates of these subgroups generate a ring with a left-invariant measure
\begin{equation}\label{eq:matrixvolume}
\mu_H(hC^H_{i_1,\ldots,i_n})=c\,q^{-di_1}\prod_{\ell=2}^nX_\ell^{di_\ell},
\qquad c\in(\mathbb R_n)_{>0}.
\end{equation}
For every such depth and every integer $m_0\ge0$, the index $[C^H_{i_1,\ldots,i_n}:C^H_{i_1+m_0,i_2,\ldots,i_n}]$ is $q^{dm_0}$.
\end{proposition}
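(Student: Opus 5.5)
Identify $H(F)$ near $I_m$ with a $d$-dimensional analytic submanifold of $F^{m^2}$ by the affine coordinate $x=g-I_m$. Then build a chart $z$ whose scalar balls are exactly the traces of the matrix balls $M_m(\mathfrak{p}_{i_1,\ldots,i_n})$ on $H(F)$.

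The tangent space $T=\operatorname{Lie}H\subseteq M_m(F)\cong F^{m^2}$ is the image of an injective linear map $L:F^d\hookrightarrow F^{N}$, $N=m^2$. Apply Lemma~\ref{lem:tangent} to get $U\in\operatorname{GL}_N(\mathcal O)$ and $M\in\operatorname{GL}_d(F)$ with $ULM=\binom{I_d}{0}$. Replace the ambient coordinate $x$ by $y=Ux$. Since $U^{\pm1}$ preserve every $\mathfrak{p}^{\,N}_{i_1,\ldots,i_n}$, we have $x\in M_m(\mathfrak{p}_{i_1,\ldots,i_n})$ if and only if $y\in\mathfrak{p}^{\,N}_{i_1,\ldots,i_n}$. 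In the $y$-coordinates, $T$ becomes $F^d\times 0$.

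By the implicit function theorem, near $0$ the image of $H(F)$ is a graph $\{(u,\phi(u))\}$, with $\phi:F^d\to F^{N-d}$ analytic, $\phi(0)=0$ and $\phi'(0)=0$. Define $z(g)=u$, the first $d$ coordinates of $U(g-I_m)$. This is an identity chart.

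The key claim is that, for deep enough depths,
\[
y=(u,\phi(u))\in\mathfrak{p}^{\,N}_{i_1,\ldots,i_n}\iff u\in\mathfrak{p}^{\,d}_{i_1,\ldots,i_n}.
\]
The forward direction is trivial. For the converse, we need $\phi(\mathfrak{p}^{\,d}_{i_1,\ldots,i_n})\subseteq\mathfrak{p}^{\,N-d}_{i_1,\ldots,i_n}$. This is the quadratic estimate already proved for $S$ in Lemma~\ref{lem:estimate}, since every monomial of $\phi$ has degree at least two. Rerun that coefficient argument with $\rho=t_n^L\rho_0$ to get constants $\kappa$ such that
\[
v(\phi(u))\ge 2v(u)-(\kappa_1,\ldots,\kappa_{n-1},\kappa_n-1)>v(u)
\]
whenever $v(u)\ge(\kappa_1,\ldots,\kappa_n)$. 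Hence $\phi(u)\in\mathfrak{p}_{i_1,\ldots,i_n}$. Enlarging the $\kappa$'s so that the balls lie in the chart domain gives \eqref{eq:matrixidentity}.

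There is one subtlety: the claim is about $g\in H(F)$ globally, not just near the identity. Every $g\in C^H_{i_1,\ldots,i_n}$ satisfies $g-I_m\in M_m(\mathfrak{p}_{\kappa})\subseteq M_m(t_n^{\kappa_n}\mathcal O_F)$. So $g$ lies in a small outer neighborhood of $I_m$, and enlarging $\kappa_n$ puts it inside the graph neighborhood. This uses that $H(F)$ is closed and that the graph description holds on a full outer neighborhood of $I_m$ intersected with $H(F)$.

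Once \eqref{eq:matrixidentity} holds, enlarge the $\kappa$'s further so that Lemma~\ref{lem:estimate} and Proposition~\ref{prop:deep} apply to the chart $z$. Then the $C^H_{i_1,\ldots,i_n}$ are the subgroups $K_{i_1,\ldots,i_n}$. The index formula $[C^H_{i_1,\ldots,i_n}:C^H_{i_1+m_0,i_2,\ldots,i_n}]=q^{dm_0}$ follows from \eqref{eq:deepindex}. The measure \eqref{eq:matrixvolume} is $\mu^{\mathrm{abs}}_{H,z}$ from \eqref{eq:absnormalization}, using Theorem~\ref{thm:main} and Corollary~\ref{cor:analyticfamily}. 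The constant $c$ accounts for an arbitrary positive rescaling, for instance $c=1$ in the absolute normalization, or the gauge factor $|h_z(0)|_n$ from \eqref{eq:gauge}.

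The main obstacle is the converse inclusion above: controlling $\phi$ uniformly in the rank-$n$ valuation and not only in the outer valuation. I will handle it exactly as in Lemma~\ref{lem:estimate}. Only finitely many normalized coefficients $b_K\rho^{|K|-2}$ have nonpositive outer valuation. Their inner valuations are absorbed into $\kappa_1,\ldots,\kappa_{n-1}$, and the extra $1$ in $\kappa_n$ gives strict improvement.
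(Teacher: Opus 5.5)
Your proof is correct and follows the paper's outline. You normalize the tangent space with an integral matrix via Lemma~\ref{lem:tangent}, show that the nonlinear part of the embedding has strictly larger valuation than the chart coordinate so that matrix balls and coordinate balls correspond, and then invoke Proposition~\ref{prop:deep} and the absolute normalization. The differences are minor: you use the graph chart $z(g)=\operatorname{pr}_1U(g-I_m)$ instead of the reparametrization $x\mapsto\varphi(Mx)$, and you obtain $v(\phi(u))>v(u)$ by rerunning the quadratic coefficient estimate of Lemma~\ref{lem:estimate}. The paper instead writes each remainder component as $\sum_r x_rT_{sr}(x)$ with $T_{sr}(0)=0$ and shrinks the domain until $T_{sr}(x)\in t_n\mathcal{O}_F$. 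That gives the strict inequality on a whole outer neighborhood with no inner-component bookkeeping, so the ``main obstacle'' you identify is milder than it looks.
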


\begin{proof}
Identify $M_m(F)$ with $F^{m^2}$. Choose an analytic parametrization $\varphi$ of $H$ at $I_m$. Smoothness and the closed immersion give
\[
\varphi(0)=I_m,\qquad \operatorname{rank}\varphi'(0)=d.
\]
By Lemma~\ref{lem:tangent}, choose $M\in\operatorname{GL}_d(F)$ and $U\in\operatorname{GL}_{m^2}(\mathcal{O})$ with
\[
U\varphi'(0)M=\begin{pmatrix}I_d\\0\end{pmatrix}.
\]
For $\iota_H(x)=\varphi(Mx)$, the Taylor expansion becomes
\[
U(\iota_H(x)-I_m)=\begin{pmatrix}x+N_1(x)\\N_2(x)\end{pmatrix},
\qquad
N_1(0)=N_2(0)=0,\quad N_1'(0)=N_2'(0)=0.
\]

Factor one coordinate out of each remainder monomial:
\[
(N_1,N_2)_s(x)=\sum_{r=1}^d x_rT_{sr}(x),
\qquad T_{sr}(0)=0.
\]
Shrink the domain until $T_{sr}(x)\in t_n\mathcal{O}_F$. Then, for $x\ne0$,
\[
\begin{aligned}
v(T_{sr}(x))&>0,\\
v(N_1(x)),\ v(N_2(x))
&\ge\min_{s,r}\{v(x_r)+v(T_{sr}(x))\}>v(x),\\
v(x+N_1(x))&=v(x),\\
v\bigl((x+N_1(x),N_2(x))\bigr)&=v(x).
\end{aligned}
\]
Zero terms have valuation $+\infty$; an empty block is omitted. Since $U$ preserves the minimum coordinate valuation,
\begin{equation}\label{eq:minvaluation}
\min_{s,t}v\bigl(\iota_H(x)_{st}-(I_m)_{st}\bigr)=v(x).
\end{equation}
The equality includes $x=0$.

Choose the chart with $\iota_H(W)=H(F)\cap\Omega$, where $\Omega$ is an ambient identity neighborhood. Choose component constants as in Lemma~\ref{lem:estimate}, enlarged so that
\[
\mathfrak{p}_{\kappa_1,\ldots,\kappa_n}^{\,d}\subset W,\qquad
I_m+M_m(\mathfrak{p}_{\kappa_1,\ldots,\kappa_n})\subset\Omega.
\]
For $(i_1,\ldots,i_n)\ge(\kappa_1,\ldots,\kappa_n)$, nesting places the corresponding coordinate ball in $W$ and the matrix ball in $\Omega$. For $z=\iota_H^{-1}$ and $g\in H(F)\cap\Omega$, equation \eqref{eq:minvaluation} gives
\[
\begin{aligned}
g\in I_m+M_m(\mathfrak{p}_{i_1,\ldots,i_n})
&\ \Longleftrightarrow\
\min_{s,t}v(g_{st}-(I_m)_{st})\ge(i_1,\ldots,i_n)\\
&\ \Longleftrightarrow\
v(z(g))\ge(i_1,\ldots,i_n)\\
&\ \Longleftrightarrow\
z(g)\in\mathfrak{p}_{i_1,\ldots,i_n}^{\,d}.
\end{aligned}
\]
This proves \eqref{eq:matrixidentity} for every depth in the statement. For $m_0\ge0$,
\[
(i_1+m_0,i_2,\ldots,i_n)
\ge(i_1,\ldots,i_n)\ge(\kappa_1,\ldots,\kappa_n),
\]
so both depths remain in this family. Proposition~\ref{prop:deep} therefore gives subgroups with
\[
[C^H_{i_1,\ldots,i_n}:C^H_{i_1+m_0,i_2,\ldots,i_n}]=q^{dm_0}.
\]
The analytic-group construction in the proof of Theorem~\ref{thm:main}, with these component shifts and \eqref{eq:absnormalization}, gives the absolute cylinder measure on the ring generated by this family. For any fixed $c\in(\mathbb R_n)_{>0}$, the left-invariant measure $\mu_H=c\,\mu^{\mathrm{abs}}_{H(F),z}$ satisfies
\[
\begin{aligned}
\mu_H(hC^H_{i_1,\ldots,i_n})
&=c\,\mu^{\mathrm{abs}}_{H(F),z}(hC^H_{i_1,\ldots,i_n})\\
&=c\,q^{-di_1}\prod_{\ell=2}^nX_\ell^{di_\ell},
\end{aligned}
\]
as required.
\end{proof}

For the symplectic group $H=\operatorname{Sp}_{2m}$, use the adapted identity coordinates and component constants of Proposition~\ref{prop:matrix}. Since $\dim H=m(2m+1)$, the matrix congruence subgroups at depths $(i_1,\ldots,i_n)\ge(\kappa_1,\ldots,\kappa_n)$ have absolute volume
\[
\mu_H(gC^H_{i_1,\ldots,i_n})
=q^{-m(2m+1)i_1}\prod_{\ell=2}^nX_\ell^{m(2m+1)i_\ell}.
\]

\begin{example}\label{ex:norm-one-torus}
Assume $\operatorname{char}F\ne2$ and choose a nonsquare $D\in F^\times$. The norm-one torus for $F(\sqrt D)/F$ has the realization
\[
T(F)=\left\{\begin{pmatrix}a&Dx\\x&a\end{pmatrix}:a^2-Dx^2=1\right\}.
\]
The derivative with respect to $a$ is $2$ at $(a,x)=(1,0)$, so there is a unique analytic solution $a=a(x)$ near the identity. Since
\[
a(x)-1=\frac{Dx^2}{a(x)+1},
\]
$x$ is an identity coordinate. Choose its component constants as in Proposition~\ref{prop:deep}. For $(i_1,\ldots,i_n)\ge(\kappa_1,\ldots,\kappa_n)$,
\[
K^T_{i_1,\ldots,i_n}=\left\{\begin{pmatrix}a(x)&Dx\\x&a(x)\end{pmatrix}:x\in\mathfrak{p}_{i_1,\ldots,i_n}\right\},
\qquad
\mu_T(gK^T_{i_1,\ldots,i_n})=q^{-i_1}\prod_{\ell=2}^nX_\ell^{i_\ell}.
\]
For instance, take $D=u$ over $F=\mathbb F_q((u))((t))((s))$ with $q$ odd. Since $T$ is commutative, its measure is bi-invariant.

\end{example}

\begin{example}\label{ex:heisenberg}
Write the Heisenberg group as
\[
h(x,y,w)=\begin{pmatrix}1&x&w\\0&1&y\\0&0&1\end{pmatrix},
\qquad \mathcal H(F)=\{h(x,y,w):x,y,w\in F\}.
\]
Its multiplication is
\[
h(x,y,w)h(x',y',w')=h(x+x',y+y',w+w'+xy').
\]
Inversion and the commutator are
\[
\begin{aligned}
h(x,y,w)^{-1}&=h(-x,-y,xy-w),\\
[h(x,y,w),h(x',y',w')]&=h(0,0,xy'-x'y).
\end{aligned}
\]
For $i_n>0$, the set $K^{\mathcal H}_{i_1,\ldots,i_n}=\{h(x,y,w):x,y,w\in\mathfrak{p}_{i_1,\ldots,i_n}\}$ is a subgroup because $\mathfrak{p}_{2i_1,\ldots,2i_n}\subset\mathfrak{p}_{i_1,\ldots,i_n}$. The bilinear commutator gives an explicit instance of Lemma~\ref{lem:estimate}:
\[
[K^{\mathcal H}_{i_1,\ldots,i_n},K^{\mathcal H}_{j_1,\ldots,j_n}]
\subseteq K^{\mathcal H}_{i_1+j_1,\ldots,i_n+j_n}
\qquad(i_n,j_n>0).
\]
The center $Z=\{h(0,0,w):w\in F\}$ is the kernel of the quotient map $\pi:\mathcal H(F)\to F^2$, $\pi(h(x,y,w))=(x,y)$. In absolute normalization, the volume and fiber integral are
\[
\begin{gathered}
\mu_{\mathcal H}(K^{\mathcal H}_{i_1,\ldots,i_n})=q^{-3i_1}\prod_{\ell=2}^nX_\ell^{3i_\ell},
\\
\pi_!\mathbf{1}_{K^{\mathcal H}_{i_1,\ldots,i_n}}
=q^{-i_1}\prod_{\ell=2}^nX_\ell^{i_\ell}\mathbf{1}_{\mathfrak{p}_{i_1,\ldots,i_n}^{\,2}},
\end{gathered}
\]
as in Proposition~\ref{prop:disintegration}.

\end{example}

\begin{example}\label{ex:affine-group}
The affine group is
\[
A=F^\times\ltimes F,\qquad (a,b)(a',b')=(aa',b+ab').
\]
At positive outer depth, the coordinates $(a-1,b)$ give
\[
K^A_{i_1,\ldots,i_n}=\{(a,b):a-1,b\in\mathfrak{p}_{i_1,\ldots,i_n}\},
\qquad \mu_A(K^A_{i_1,\ldots,i_n})=q^{-2i_1}\prod_{\ell=2}^nX_\ell^{2i_\ell}.
\]
A left-invariant form is $\omega=a^{-2}\,da\wedge db$. Since $\delta_A(a,b)=a$, right translation by $g_0=(a_0,b_0)$ gives
\[
\lambda_\omega(Sg_0)=\left|a_0\right|_n^{-1}\lambda_\omega(S).
\]
In particular, if $a_0=t_1^{j_1}\cdots t_n^{j_n}$, the factor is
\[
q^{j_1}\prod_{\ell=2}^nX_\ell^{-j_\ell}.
\]

\end{example}

\begin{example}\label{ex:abelian-varieties}
Let $\mathcal A/F$ be an abelian variety of dimension $h>0$. Choose identity coordinates $z_{\mathcal A}$ with values in $F^h$ and component constants as in Proposition~\ref{prop:deep}. For $(i_1,\ldots,i_n)\ge(\kappa_1,\ldots,\kappa_n)$, set
\[
K^{\mathcal A}_{i_1,\ldots,i_n}
:=z_{\mathcal A}^{-1}(\mathfrak{p}_{i_1,\ldots,i_n}^{\,h}).
\]
For $P\in\mathcal A(F)$ and $m\ge0$, this gives
\[
\mu_{\mathcal A}(P+K^{\mathcal A}_{i_1,\ldots,i_n})=q^{-hi_1}\prod_{\ell=2}^nX_\ell^{hi_\ell},
\qquad
[K^{\mathcal A}_{i_1,\ldots,i_n}:K^{\mathcal A}_{i_1+m,i_2,\ldots,i_n}]=q^{hm}.
\]
The measure is bi-invariant because $\mathcal A(F)$ is commutative. For an elliptic curve $\mathcal E/F$ with identity $O$, the formal group gives an identity coordinate $z$ with $z(O)=0$ \cite[Chapter~IV]{silverman}; the formulas above apply with $h=1$. These give non-affine instances of Theorem~\ref{thm:main}.

\end{example}

\par\medskip\noindent\textbf{Acknowledgements:}
The authors thank Ivan Fesenko for his support and discussions on higher local fields. This work was supported by Westlake University grants 207010146022301 and 207010145609901, and by the Hangzhou Postdoctoral Daily Funding program.

\Needspace{12\baselineskip}

\end{document}